\newtheorem{theorem}{Theorem}[section]
\newtheorem{corollary}{Corollary}
\newtheorem{lemma}[theorem]{Lemma}
\newtheorem{proposition}{Proposition}
\newtheorem{definition}[theorem]{Definition}
\title[Stability distributed delay]{Optimal linear stability condition for scalar differential equations with distributed delay}
\thanks{Work supported by ANR grant ProCell ANR-09-JCJC-0100-01.}
\begin{document}
\maketitle

\centerline{\scshape Samuel Bernard }
\medskip
{\footnotesize

 \centerline{Universit\'e de Lyon; CNRS UMR 5208;}
  \centerline{Universit\'e Lyon 1; Institut Camille Jordan;}
   \centerline{INRIA Team Dracula}
   \centerline{43 blvd. du 11 novembre 1918,
F-69622 Villeurbanne cedex, France.}
} 

\medskip

\centerline{\scshape Fabien Crauste}
\medskip
{\footnotesize
 \centerline{Universit\'e de Lyon; CNRS UMR 5208;}
  \centerline{Universit\'e Lyon 1; Institut Camille Jordan;}
   \centerline{INRIA Team Dracula}
   \centerline{43 blvd. du 11 novembre 1918,
F-69622 Villeurbanne cedex, France.}
}

\bigskip


\begin{abstract}
Linear scalar differential equations with distributed delays appear in the study of the local stability of nonlinear differential equations with feedback, which are common in biology and physics. Negative feedback loops tend to promote oscillations around steady states, and their stability depends on the particular shape of the delay distribution. Since in applications the mean delay is often the only reliable information available about the distribution, it is desirable to find conditions for stability that are independent from the shape of the distribution. We show here that for a given mean delay, the linear equation with distributed delay is asymptotically stable if the associated differential equation with a discrete delay is asymptotically stable. We illustrate this criterion on a compartment model of hematopoietic cell dynamics to obtain sufficient conditions for stability.
\end{abstract}

\section{Introduction}

Models of self-regulating systems often include discrete delays in the feedback loop to account for the finite time required to perform essential steps before the loop is closed. Such mathematical simplifications are especially welcome in biological applications, where knowledge about the loop steps is usually sparse. This includes maturation and growth times needed to reach reproductive age in a population \cite{hutchinson1948,m1978}, signal propagation along neuronal axons \cite{campbell2007}, and post-translational protein modifications \cite{bernard2006b, monk2003}. Introduction of a discrete delay in an ordinary differential equation can destabilize steady states and generate complex dynamics, from limit cycles to chaos \cite{kuang1993}. Although the linear stability properties of scalar equations with single discrete delays are fairly well characterized, lumping intermediate steps into a delayed term can produce broad and atypical delay distributions that deviate from discrete delays, and it is still not clear how that affects the stability of the equation \cite{campbell2009}. 

The delayed feedback differential equation of the form 
\begin{displaymath}
\dot x = F\Bigl(x, \int_0^{\infty}  x(t-\tau) d\eta(\tau) \Bigr)
\end{displaymath}
is a model paradigm in biology and physics \cite{adimy2005, atay2003, eurich2005, meyer2008, monk2003, rateitschak2007}. The first argument of $F$ is the instantaneous part of the loop and the second one, the delayed or retarded part, which closes the feedback loop. The integral is taken in the Riemann-Stieltjes sense. The function $\eta$ is a cumulative probability distribution function, it can be continuous, discrete, or a mixture of continuous and discrete elements. In most cases, the stability of the above equation is related to its linearized equation about one of its steady states $\bar x$,
\begin{align}\label{eq:x-lin-1}
\dot x & = - a x - b \int_0^{\infty}  x(t-\tau) d \eta(\tau)
\end{align}
where the constants $a$ and $b \in \mathbb{R}$ are the negatives of the derivatives of the instantaneous and the delayed parts of $F$ at $x=\bar x$,
\begin{align*}
 a & = - \frac{\partial }{\partial x} F(x,y) \Bigl|_{x=y=\bar x} \; \quad \text{and} \quad \; b  = - \frac{\partial }{\partial y}F(x,y) \Bigl|_{x=y=\bar x}.
\end{align*}

Eq. (\ref{eq:x-lin-1}) is also called a linear retarded functional differential equation. Basic theory for delay differential equations and functional differential equations can be found in \cite{bellman1963} and \cite{hale1993}. Additional applications can be found in previously mentioned references and in \cite{erneux2009, kuang1993}. 

Stability analysis of Eq. (\ref{eq:x-lin-1}), when the distribution function $\eta$ differs from the Dirac distribution, has been the subject of several works. In 1989, Boese \cite{b1989} analyzed the stability of (\ref{eq:x-lin-1}) for a Gamma distribution, and determined rather technical sufficient conditions for its asymptotic stability. Kuang \cite{k1994}, in 1994, considered a system of two differential equations with continuous distributed delay, possibly infinite. He focused on the existence of pure imaginary eigenvalues, and determined conditions for their nonexistence, obtaining sufficient conditions for the asymptotic stability of his system. In 2001, Bernard et al \cite{bernard01} considered (\ref{eq:x-lin-1}) and determined sufficient conditions for its stability, mainly in the case where the distribution is symmetric about its mean. They then conjectured that the single Dirac measure would be the most destabilizing distribution of delays for (\ref{eq:x-lin-1}). Atay \cite{atay2008} recently gave arguments in that direction. He focused on the stability of delay differential equations near a Hopf bifurcation, and for linear delay differential equations, such as (\ref{eq:x-lin-1}), he showed that if the delay has a destabilizing effect, then the discrete delay is locally the most destabilizing delay distribution.

Huang and Vandewalle \cite{hv2004} and Tang \cite{t2005} also analyzed the stability of equations similar to (\ref{eq:x-lin-1}). The first authors were interested in the numerical stability of differential equations with distributed delay, but they proposed an interesting geometrical approach to determine conditions for the stability of (\ref{eq:x-lin-1}) for a special delay distribution. Unfortunately, their method cannot be generalized to general distributions. In \cite{t2005}, Tang determined sufficient stability conditions for very general differential equations with distributed delay, but his results are very technical and not easy to handle in practice. Adimy et al \cite{adimy2005} and Crauste \cite{c2010} obtained sufficient conditions for the existence of a Hopf bifurcation when the delay density function is decreasing. In \cite{obc2008}, Ozbay et al. investigated the stability of linear systems of equations with distributed delays, and applied their results to a model of hematopoietic stem cell dynamics. Considering an exponential distribution of delays, they obtained necessary and sufficient conditions for the stability using the small gain theorem and Nyquist stability criterion. Solomon and Fridman, using Linear Matrix Inequalities, also established sufficient conditions for exponential stability of systems with infinite distributed delays \cite{sf2013}. Berezansky and Braverman recently obtained sufficient conditions for the stability of non-autonomous differential equations with distributed delay \cite{bb2011, bb2013}.  

Finally, let us mention the work of Anderson \cite{anderson1991, a1992}, who focused on the stability of some delay differential
equations, called \emph{regulator models}, which are a particular form of (\ref{eq:x-lin-1}). The theory developed by Anderson \cite{anderson1991, a1992} focuses on the properties of the probability distribution $\eta$. Although the results of Anderson are only valid for some class of probability measures, they stress the importance of the shape of the delay distribution. Moreover, Anderson mentions that ``the more concentrated the probability measure, the worse the stability property of the model'' \cite{a1992}. 

Although it has been observed that in general a greater relative variance provides a greater stability, a property linked to geometrical features of the delay distribution \cite{anderson1991}, there are counter-examples to this principle. Yet, as mentioned above, it has been conjectured that among distributions with a given mean, the discrete delay is the least stable one \cite{atay2008, bernard01}. If this were true, a theorem due to Hayes \cite{hayes1950} would provide a sufficient condition for the stability of the trivial solution of delay differential equations independently from the shape of the delay distribution. This conjecture has been proved by Krisztin using Lyapunov-Razumikhin functions when there is no instantaneous part \cite{krisztin1990}, and by different authors for distributions that are symmetric about their means \cite{atay2008,bernard01,kiss2009,miyazaki1997}. It is possible to lump the non-delayed term into the delay distribution and use the condition found in \cite{krisztin1990}, but the resulting stability condition is not optimal. Here we prove that the conjecture is true for all delay distributions with exponential tails. That is, for a given mean delay, the scalar linear differential equation with a distributed delay is asymptotically stable provided that the corresponding equation with a single discrete delay is asymptotically stable. This sufficient condition for stability is optimal in the sense that if it is not satisfied, we can find a distribution with distributed delay for which the equation is not stable. To illustrate this general result, we consider a compartment model of hemato\-poiesis that can be expressed as a scalar differential equation with an arbitrarily complex delay distribution, and we obtain a simple stability condition. 

In section \ref{s:pre}, we provide definitions and, in Section \ref{s:gsc}, we set the stage for the main stability results. In section \ref{s:d}, we show that a distribution of discrete delays is necessarily stable when the discrete distribution with a single delay equal to the mean is stable. In section \ref{s:g}, we present the generalization to any distribution, hence showing that distributions with distributed delays provide more stability than the discrete distribution with the same mean. Section \ref{s:hemato} is devoted to the presentation of a model for hematopoiesis and the illustration of the stability problem.

\section{Definitions}\label{s:pre}

We consider the linear retarded functional differential equation
\begin{align}\label{eq:x}
\dot x & = - a x - b \int_0^{\infty}  x(t-\tau) d \eta(\tau)
\end{align}
with real constants $a$ and $b$. We assume that $\eta$ is a cumulative probability distribution function: $\eta: [0,\infty) \to [0, 1]$ is monotone nondecreasing, right-continuous, $\eta(\tau)=0$ for $\tau<0$ and $\eta(+\infty)=1$. The corresponding probability density functional  $f(\tau)$ is given by the generalized derivative $d \eta(\tau) = f(\tau) d\tau$. The following definitions and Theorem \ref{thm:asympt} follow from St\'ep\'an \cite{stepan1989}. 

Let $B$ be the vector space of continuous and bounded functions on $[-\infty,0] \to \mathbb{R}$. With the norm $|| \phi || = \sup_{\theta \in [-\infty,0]} | \phi(\theta) |$, $\phi \in B$, $B$ is a Banach space.

\begin{definition} The function $x:\mathbb{R} \to \mathbb{R}$ is a solution of Eq.~(\ref{eq:x}) with the initial condition 
\begin{equation}\label{eq:ic}
x_\sigma = \phi, \; \sigma \in \mathbb{R}, \; \phi \in B, 
\end{equation} 
if there exists a scalar $\delta>0$ such that $x_t \equiv x(t+\theta) \in B$ for $\theta\in[-\infty,0]$ and $x$ satisfies Eqs. (\ref{eq:x}) and (\ref{eq:ic}) for all $t \in [\sigma, \sigma+\delta)$. 
\end{definition}

The notation $x_t(\sigma,\phi)$ is also used to refer to the solution of Eq.~(\ref{eq:x}) associated with the initial conditions $\sigma$ and $\phi$.
\begin{definition} The trivial solution $x=0$ of Eq.~(\ref{eq:x}) is stable if for every $\sigma \in \mathbb{R}$ and $\varepsilon>0$ there exists $\delta = \delta(\varepsilon)$ such that $|| x_t(\sigma,\phi) || < \epsilon$ for any $t \geq \sigma$ and for any function $\phi \in B$ satisfying $|| \phi || < \delta$. The trivial solution $x=0$ is called asymptotically stable if it is stable, and for every $\sigma \in \mathbb{R}$ there exists $\Delta = \Delta(\sigma)$ such that $\lim_{t \to \infty} || x_t(\sigma,\phi)||=0$ for any $\phi \in B$ satisfying $|| \phi || < \Delta$.
\end{definition}

\begin{definition}
The function $D:\mathbb{C} \to \mathbb{C}$ given by 
\begin{displaymath}
D(\lambda) = \lambda + a + b \int_0^\infty e^{-\lambda \tau} d\eta(\tau),
\end{displaymath}
is called the characteristic function of the linear equation (\ref{eq:x}). The equation $D(\lambda) = 0$ is called the characteristic equation of (\ref{eq:x}).
\end{definition}

The following theorem \cite{hale1974, stepan1989} gives a necessary and sufficient condition for the asymptotic stability of $x=0$.
\begin{theorem}\label{thm:asympt}
Suppose that there exists $\nu>0$ such that the following inequality is satisfied:
\begin{align}\label{eq:nu}
\int_0^{\infty} e^{\nu \tau} d\eta(\tau) < \infty.
\end{align}
The solution $x=0$ of Eq.~(\ref{eq:x}) is (exponentially) asymptotically stable if and only if all roots of the characteristic equation $D(\lambda)=0$ have $\Re(\lambda)<0$.
\end{theorem}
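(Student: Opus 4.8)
The plan is to reduce everything to the location of the zeros of $D$ via the Laplace transform and the fundamental solution. Write Eq.~(\ref{eq:x}) as $\dot x = Lx_t$ with the bounded linear functional $L\psi = -a\psi(0) - b\int_0^\infty \psi(-\tau)\,d\eta(\tau)$ on $B$. Hypothesis (\ref{eq:nu}) guarantees not only that $L$ is well defined but, crucially, that $\lambda \mapsto \int_0^\infty e^{-\lambda\tau}\,d\eta(\tau)$ is holomorphic on the half-plane $\Re\lambda > -\nu$ and bounded there by $\int_0^\infty e^{\nu\tau}\,d\eta(\tau) < \infty$. Hence $D$ is holomorphic on $\Re\lambda > -\nu$, and there $|D(\lambda)| \ge |\lambda| - |a| - |b|\int_0^\infty e^{\nu\tau}\,d\eta(\tau)$, so $|D(\lambda)| \to \infty$ as $|\lambda|\to\infty$ in every half-plane $\Re\lambda \ge -\mu$ with $0 < \mu < \nu$. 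Consequently $D$ has only finitely many zeros with $\Re\lambda \ge -\mu$; if they all have negative real part, then, after shrinking $\mu$, there is $\beta \in (0,\mu)$ with $D(\lambda) \ne 0$ for all $\Re\lambda \ge -\beta$.

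For sufficiency, let $X(\cdot)$ be the fundamental solution (the solution on $t\ge 0$ with $X(0)=1$ and $X(\theta)=0$ for $\theta<0$), whose Laplace transform is $1/D(\lambda)$. Starting from the Bromwich inversion $X(t) = \frac{1}{2\pi i}\int_{c-i\infty}^{c+i\infty} e^{\lambda t} D(\lambda)^{-1}\,d\lambda$ with $c$ large, I would shift the contour to the line $\Re\lambda = -\beta$: no zeros are crossed, and the horizontal segments vanish in the limit because $|D(\lambda)^{-1}| = O(|\lambda|^{-1})$ on that line. Since the resulting integral is only conditionally convergent, I would subtract the model term $1/\lambda$: on $\Re\lambda = -\beta$ one has $D(\lambda)^{-1} - \lambda^{-1} = -(a + b\int_0^\infty e^{-\lambda\tau}\,d\eta(\tau))/(\lambda D(\lambda)) = O(|\lambda|^{-2})$, while $\frac{1}{2\pi i}\int_{\Re\lambda=-\beta} e^{\lambda t}\lambda^{-1}\,d\lambda = 0$ for $t>0$ because the line lies to the left of the pole at $0$; this yields $|X(t)| \le M e^{-\beta t}$ for $t \ge 0$. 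Feeding this into the variation-of-constants representation of $x_t(\sigma,\phi)$ in terms of $X$ and the initial data — where the contribution of the history is controlled by $\int_0^\infty e^{\nu\tau}\,d\eta(\tau) < \infty$ together with $\|\phi\|$ — gives $\|x_t(\sigma,\phi)\| \le M' e^{-\beta(t-\sigma)}\|\phi\|$, i.e. exponential asymptotic stability. Equivalently, one may Laplace-transform the solution directly to get $\hat x(\lambda) = N_\phi(\lambda)/D(\lambda)$ with a numerator $N_\phi$ holomorphic and suitably bounded on $\Re\lambda > -\nu$, and perform the same contour shift.

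For necessity, suppose $D(\lambda_0)=0$ with $\Re\lambda_0 \ge 0$. If $\Re\lambda_0 > 0$, then (the real or imaginary part of) $t \mapsto e^{\lambda_0 t}$ solves Eq.~(\ref{eq:x}) with initial function $\theta \mapsto e^{\lambda_0\theta} \in B$, and this solution is unbounded, contradicting even plain stability; if $\Re\lambda_0 = 0$, the corresponding bounded oscillatory mode does not tend to $0$, contradicting asymptotic stability. Conceptually, the zeros of $D$ are exactly the point spectrum of the generator of the solution semigroup on $B$, and for this retarded class the growth bound of the semigroup coincides with the spectral bound, so a zero with nonnegative real part rules out asymptotic stability.

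The step I expect to be the main obstacle is the decay estimate for $X$: the Bromwich integral on the shifted line is not absolutely convergent, so the subtraction of $1/\lambda$ (or, equivalently, an integration by parts producing an extra factor $1/t$ and an $O(|\lambda|^{-2})$ kernel) has to be carried out with care, and one must verify that shrinking $\mu$ keeps the whole argument inside the strip $-\nu < \Re\lambda$ where (\ref{eq:nu}) makes $D$ holomorphic and bounded — which is precisely where the exponential-tail hypothesis is used in an essential way.
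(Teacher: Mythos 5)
The paper offers no proof of Theorem \ref{thm:asympt}: it is quoted from the references Hale (1974) and St\'ep\'an (1989), with only the informal remark about exponential-polynomial solutions and small solutions. Your sketch essentially reproduces the standard argument used in those sources, and it is sound in outline: hypothesis (\ref{eq:nu}) makes $D$ holomorphic and coercive on every half-plane $\Re\lambda\ge-\mu$ with $0<\mu<\nu$, so the zeros in such a half-plane are finitely many and one can place a zero-free line $\Re\lambda=-\beta$; the shifted Bromwich integral of $1/D-1/\lambda$ gives $|X(t)|\le Me^{-\beta t}$; variation of constants then yields exponential stability; and an exponential mode $e^{\lambda_0 t}$ with $\Re\lambda_0\ge0$ rules out (asymptotic) stability. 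Two points deserve the care you only partly flag. First, the inversion step presupposes $\hat X=1/D$ and an exponential a priori bound on the fundamental solution (or one inverts the absolutely integrable difference $1/D-1/\lambda$ directly, as you propose, after checking the inversion is legitimate for $X$, which is of locally bounded variation). Second, since the delay is supported on all of $[0,\infty)$, the variation-of-constants formula carries a forcing term built from the initial history, and to make it decay you need the quantitative tail estimate $1-\eta(s)\le e^{-\nu s}\int_0^\infty e^{\nu\tau}\,d\eta(\tau)$, so the forcing is $O(\|\phi\|e^{-\nu s})$ and its convolution with $X$ retains the rate $\beta<\nu$; this is where (\ref{eq:nu}) enters beyond mere holomorphy, and your phrase ``controlled by $\int_0^\infty e^{\nu\tau}d\eta(\tau)$ together with $\|\phi\|$'' should be expanded into exactly this bound. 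The necessity part is fine as stated: for $\Re\lambda_0\ge0$ the initial segment $\theta\mapsto e^{\lambda_0\theta}$ is bounded and continuous, hence in $B$, and linearity lets you rescale it below any $\delta$ or $\Delta$, so a root with $\Re\lambda_0>0$ contradicts stability and a purely imaginary root contradicts attractivity.
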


Theorem \ref{thm:asympt} is equivalent to the statement that solutions of Eq.~(\ref{eq:x}) of the form $x(t) = \sum_{i=1}^{\infty} \nu_i(t) e^{\lambda_i t}$ where $\lambda_i$ are the roots of the characteristic equation and $\nu_i(t)$ polynomials, are enough to determine the stability of $x=0$. Other solutions, the small solutions, decay faster than any exponential; hence the exponential stability. 

Inequality (\ref{eq:nu}) implies that the mean delay value is finite,
\begin{align*}
E & := \int_0^{\infty} \tau d\eta(\tau) < \infty.
\end{align*}
We assume in the following that inequality (\ref{eq:nu}) is always satisfied. For more details concerning retarded functional differential equations with infinite delays, see \cite{hale1974,hale1978}.

When $\eta$ represents a single discrete delay ($\eta$ a heaviside function), the asymptotic stability of the zero solution of Eq.~(\ref{eq:x}) is fully determined by the following theorem, originally due to Hayes \cite{hayes1950}.

\begin{theorem}\label{th:hayes}
Let $f(\tau) = \delta(\tau-E)$ a Dirac mass at $E$. The zero solution of Eq.~(\ref{eq:x}) is asymptotically stable if and only if $a>-b$ and $a \geq |b|$, or if $b>|a|$ and
\begin{displaymath}
E <  \frac{\arccos(-a/b)}{\sqrt{b^2-a^2}}.
\end{displaymath}
\end{theorem}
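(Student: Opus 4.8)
The plan is to invoke Theorem~\ref{thm:asympt}. Since here $\eta$ is a Heaviside function with a single jump at $E$, the integrability condition (\ref{eq:nu}) holds for every $\nu>0$, so the zero solution is asymptotically stable if and only if every root of the characteristic function $D(\lambda)=\lambda+a+be^{-\lambda E}$ lies in the open left half-plane $\Re(\lambda)<0$. The degenerate case $b=0$ is immediate (the only root is $\lambda=-a$), so assume $b\neq 0$. I would first dispose of an instability region: $D(0)=a+b$ and $D(\lambda)\to+\infty$ as $\lambda\to+\infty$ along the reals, so if $a+b\le 0$ there is a real root $\lambda\ge 0$ and the equation is not asymptotically stable. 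Hence one may assume $a+b>0$, and then split into the two cases $|b|\le a$ and $|b|>a$, which (granted $a+b>0$) coincide respectively with ``$a\ge|b|$ and $a>-b$'' and with ``$b>|a|$''.

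In the case $|b|\le a$ (so necessarily $a>0$), I would show directly that no root has $\Re(\lambda)\ge 0$: for such a $\lambda$, using $\lambda+a=-be^{-\lambda E}$,
$$
|b|\,e^{-E\Re(\lambda)}=|\lambda+a|\ge\Re(\lambda+a)=a+\Re(\lambda)\ge a\ge|b|\ge|b|\,e^{-E\Re(\lambda)},
$$
so all these are equalities. Then $\Re(\lambda)=0$, and $|\lambda+a|=a+\Re(\lambda)$ with $\Re(\lambda)=0$ forces $\Im(\lambda)=0$, i.e.\ $\lambda=0$; but $D(0)=a+b\neq 0$, a contradiction. Thus every root is in the left half-plane and the equation is asymptotically stable, which is the first alternative of the theorem.

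The substantive case is $b>|a|$, where I would run a root-continuation argument in the parameter $E$. Here $-a/b\in(-1,1)$ and $b^2-a^2>0$, so $E^\star:=\arccos(-a/b)/\sqrt{b^2-a^2}$ is well defined. Looking for purely imaginary roots $\lambda=i\omega$ with $\omega>0$ ($\omega=0$ is excluded since $D(0)=a+b>0$, and $\omega<0$ gives conjugates), the real and imaginary parts of $D(i\omega)=0$ give $a+b\cos(\omega E)=0$ and $\omega=b\sin(\omega E)$; squaring and adding yields $\omega=\omega_0:=\sqrt{b^2-a^2}$, and since $b,\omega_0>0$ the sine is positive, so $\omega_0E\in\{\arccos(-a/b)+2k\pi:k\ge 0\}$, the smallest admissible delay being $E^\star$. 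At $E=0$ the unique root is $\lambda=-(a+b)<0$, so the equation is asymptotically stable there. A transversality computation---differentiate $D(\lambda)=0$ with respect to $E$ and substitute $be^{-\lambda E}=-(\lambda+a)$ at $\lambda=i\omega_0$---gives $\Re(d\lambda/dE)=\omega_0^2/\bigl((1+aE)^2+\omega_0^2E^2\bigr)>0$, so every imaginary-axis crossing is from left to right. Combining this with the classical fact that the roots of $D$ in any strip $\Re(\lambda)\ge -c$ are finite in number and depend continuously on $E$, one concludes that all roots stay in the left half-plane for $E\in[0,E^\star)$ (asymptotic stability), there is a root on the axis at $E=E^\star$ (not asymptotically stable), and at least one root lies in the right half-plane for $E>E^\star$ (instability). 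This is the second alternative, and assembling the two cases completes the proof.

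The one step that is not a bare computation, and which I regard as the main obstacle, is the topological input used in the last case: that the roots depend continuously on $E$ and that instability is created only through crossings of the imaginary axis. This is standard---it follows from Rouch\'e's theorem on a large circle, or from the functional-differential-equation theory cited above---but it requires some care because, as $E\to 0^+$, infinitely many roots enter from $\Re(\lambda)=-\infty$, and one must check that they remain far in the left half-plane and hence cannot affect the count of unstable roots near $E=0$.
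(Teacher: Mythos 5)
Your argument is correct, but note that the paper does not prove Theorem \ref{th:hayes} at all: it quotes it as a classical result of Hayes \cite{hayes1950}, so there is no internal proof to compare against. What you give is the standard D-subdivision/stability-switch proof: reduce to the characteristic function $\lambda+a+be^{-\lambda E}$ via Theorem \ref{thm:asympt}, kill the region $a+b\le 0$ with a real root, handle $a\ge|b|$, $a+b>0$ by the modulus estimate (your equality-chain argument in fact proves, for the discrete delay, the delay-independent statement (ii) that the paper also asserts without proof), and in the region $b>|a|$ locate the imaginary-axis crossings at $\omega_0=\sqrt{b^2-a^2}$, $E=E^\star+2k\pi/\omega_0$, and use the transversality $\Re(d\lambda/dE)=\omega_0^2/\bigl((1+aE)^2+\omega_0^2E^2\bigr)>0$ to conclude that the rightmost roots cross once and never return. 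The topological ingredient you flag (continuity of roots in $E$ and the fact that unstable roots can only be created by crossing the imaginary axis) is genuinely needed but is standard, and is exactly the kind of argument the paper itself invokes, via St\'ep\'an, in the proof of Proposition \ref{pr:os}; the usual way to discharge your worry about roots entering from $\Re(\lambda)=-\infty$ as $E\to 0^+$ is the a priori bound $|\lambda|\le|a|+|b|$ for any root with $\Re(\lambda)\ge 0$, which confines all potentially unstable roots to a fixed compact set uniformly in $E$ and lets Rouch\'e's theorem control their number. So your proposal is a sound, self-contained substitute for the external citation, at the cost of writing out that classical continuation lemma carefully.
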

More generally, the following statements always hold for any delay distribution:
\begin{itemize}
\item[(i)] When $a \leq -b$, the characteristic equation of Eq.~(\ref{eq:x}) has a positive real root.
\item[(ii)] When $a\geq|b|$ and $a>-b$, the characteristic equation of Eq.~(\ref{eq:x}) has no root with positive real part. 
\end{itemize}
Therefore, the stability of the solution $x=0$ depends on the delay distribution only in the parameter space region $b>|a|$ and, from now on, we restrict the stability analysis to that region. 

Assuming $b>0$ and making the change of timescale $t \to bt$, we have $a \to a/b$, $b \to 1$ and $\eta(\tau) \to \eta(b\tau)$. Eq.~(\ref{eq:x}) can be rewritten as
\begin{align}\label{eq:xx}
 \dot x & = - a x - \int_0^{\infty} x(t-\tau) d\eta(\tau).
\end{align}
The delay distributions affect the stability of Eq.~(\ref{eq:xx}) when $a\in]0,1[$. 

The characteristic equation is called stable if all roots have $\Re(\lambda)<0$ \cite{stepan1989}. To emphasize the relation between the stability and the delay distribution, we give a similar definition for the delay distribution. 
\begin{definition} \label{def:stable}
The delay distribution $\eta$ (or the density $f$) is called stable if all roots of the characteristic equation of Eq.~(\ref{eq:x}), or Eq.~(\ref{eq:xx}), have $\Re(\lambda)<0$. The delay distribution $\eta$ (or the density $f$) is called unstable if there exists a characteristic root with $\Re(\lambda)>0$.     
\end{definition}

According to Theorem \ref{thm:asympt} and using Definition \ref{def:stable}, the solution $x=0$ of Eq.~(\ref{eq:xx}) is asymptotically stable if and only if the delay distribution is stable. The characteristic equation of Eq.~(\ref{eq:xx}) is
\begin{align}\label{eq:ce}
D(\lambda) = \lambda + a + \int_0^\infty{e^{-\lambda \tau} d\eta(\tau)} = 0.
\end{align} 
The integral term in Eq.~(\ref{eq:ce}) is the Laplace transform $\mathcal{L}$ of the distribution $\eta$. Along the imaginary axis $\lambda = i\omega$, the Laplace transform can be expressed as $(\mathcal{L}\eta)(i \omega) = C(\omega)-i S(\omega)$, where  
\begin{displaymath}
C(\omega) = \int_{0}^{\infty} \cos(\omega \tau) d\eta(\tau), \qquad S(\omega) = \int_{0}^{\infty} \sin(\omega \tau) d\eta(\tau).
\end{displaymath}

The strategy for determining the stability of distributed delays is the following. We use a geometric argument to bound the roots of characteric equation (\ref{eq:ce}) by the roots of the characteristic equation for a single discrete delay. More precisely, we will show that if the leading roots associated to the discrete delay are a pair of imaginary roots, then all the roots associated to the distribution of delays have negative real parts. We first state, in Section \ref{s:gsc}, a criterion for stability: if $S(\omega)<\omega$ whenever $C(\omega)=-a$, then the distribution is stable. 
We then show in Theorem \ref{th:n} that a distribution of $n$ discrete delays is more stable than a certain distribution with two delays (in the sense that $S(\omega)\leq S^*(\omega)$, where the distribution with $n$ delays is denoted by $\eta$ and the ``special'' distribution with two delays by $\eta^*$). We construct this most ``unstable'' distribution and determine that only one of the delays is positive, so that its stability can be determined using Theorem \ref{th:hayes}. We then generalize for any distribution of delays in Section \ref{s:g}.

\section{General Stability Criteria}\label{s:gsc}

Assume $a\in]-1,1[$, and let $\eta$ be a distribution with mean $E$. We consider the family of distributions, scaled with the parameter $\rho \geq 0$,
\begin{align}\label{eq:scale}
\eta_\rho(\tau) = \begin{cases} 
	\eta(\tau/\rho), & \rho>0, \\
	H(\tau), & \rho=0,
\end{cases}
\end{align}
where $H(\tau)$ is the step or heaviside function at 0, corresponding to a single discrete delay vanishing at $\tau=0$. The distribution $\eta_\rho$ has a mean $\rho E \geq 0$.  The notation $D_\rho$ is used to refer to the characteristic equation associated with the scaled distribution $\eta_\rho$. The characteristic equation for the distribution $\eta_0$ is $D_0(\lambda) := \lambda + a + 1=0$. 

The next proposition provides a necessary condition for instability. It is a direct consequence of Theorem 2.19 in \cite{stepan1989}. 

\begin{proposition}\label{pr:os} If the distribution $\eta$ is unstable, then there exists $\omega_s \in (0,\omega_c]$, $\omega_c=\sqrt{1-a^2}$, such that $C(\omega_s) = -a$ and $S(\omega_s) \geq \omega_s$.
\end{proposition}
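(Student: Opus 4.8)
\emph{Proof idea.} The plan is to deform $\eta$ through the scaled family $\eta_\rho$ from \eqref{eq:scale}: at $\rho=0$ the characteristic function is $D_0(\lambda)=\lambda+a+1$, whose only root $\lambda=-(1+a)$ lies in the open left half-plane because $a\in(-1,1)$, while at $\rho=1$ we have $\eta_1=\eta$, which is unstable by hypothesis and hence has a root with $\Re\lambda>0$. If I can show that as $\rho$ increases from $0$ to $1$ a root must cross the imaginary axis, then reading off the real and imaginary parts of the crossing equation will give the two inequalities.

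The key preliminary step would be a uniform a priori bound on the roots in the closed right half-plane. For $\Re\lambda\ge0$ one has $\bigl|\int_0^\infty e^{-\lambda\tau}\,d\eta_\rho(\tau)\bigr|\le\int_0^\infty d\eta_\rho(\tau)=1$, so $|D_\rho(\lambda)|\ge|\lambda|-|a|-1$; hence every root of $D_\rho$ with $\Re\lambda\ge0$ satisfies $|\lambda|\le|a|+1<2$, and $D_\rho$ has no zero on the arc $\{|\lambda|=2,\ \Re\lambda\ge0\}$. Since $D_\rho(\lambda)$ is jointly continuous in $(\rho,\lambda)$ and converges uniformly on compact subsets of the closed right half-plane as $\rho\to\rho_0$, a standard argument-principle (Hurwitz-type) argument shows that the number of roots of $D_\rho$ in the open right half-plane can change, as $\rho$ runs over $[0,1]$, only at values $\rho$ for which $D_\rho$ has a purely imaginary root. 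This count is $0$ at $\rho=0$ and positive at $\rho=1$, and $D_0$ itself has no imaginary root, so there exist $\rho^\ast\in(0,1]$ and $\omega^\ast\in\mathbb{R}$ with $D_{\rho^\ast}(i\omega^\ast)=0$; moreover $\omega^\ast\neq0$ since $D_{\rho^\ast}(0)=a+1>0$, so by conjugate symmetry of $D_{\rho^\ast}$ we may take $\omega^\ast>0$. (This existence of a crossing is essentially the content of Theorem 2.19 in \cite{stepan1989}, which one may invoke directly; the a priori bound is the one place where the hypothesis $a\in(-1,1)$ is genuinely used.)

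The rest would be bookkeeping. From $d\eta_\rho(\tau)=d\eta(\tau/\rho)$, the substitution $u=\tau/\rho$ gives $\int_0^\infty e^{-i\omega\tau}\,d\eta_{\rho^\ast}(\tau)=C(\rho^\ast\omega)-i\,S(\rho^\ast\omega)$, so $D_{\rho^\ast}(i\omega^\ast)=0$ separates into $a+C(\rho^\ast\omega^\ast)=0$ and $\omega^\ast-S(\rho^\ast\omega^\ast)=0$. Setting $\omega_s:=\rho^\ast\omega^\ast>0$, this reads $C(\omega_s)=-a$ and $S(\omega_s)=\omega^\ast=\omega_s/\rho^\ast\ge\omega_s$ because $0<\rho^\ast\le1$. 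Finally $C(\omega_s)^2+S(\omega_s)^2=\bigl|\int_0^\infty e^{-i\omega_s\tau}\,d\eta(\tau)\bigr|^2\le1$ forces $S(\omega_s)^2\le1-a^2=\omega_c^2$, and then $\omega_s\le S(\omega_s)\le\omega_c$ yields $\omega_s\in(0,\omega_c]$.

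I expect the crossing argument of the second paragraph to be the main obstacle: one must make precise that no root can enter the open right half-plane \emph{from infinity} as $\rho$ varies — which is exactly what the bound $|\lambda|\le|a|+1$ rules out — and that the right half-plane root count is locally constant off the (closed) set of $\rho$ admitting an imaginary root. Everything afterwards is a direct computation.
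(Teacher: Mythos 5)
Your argument is correct and follows essentially the same route as the paper: deform through the scaled family $\eta_\rho$ of \eqref{eq:scale}, locate a purely imaginary root at the first destabilizing $\rho^\ast\in(0,1]$, separate real and imaginary parts, rescale via $\omega_s=\rho^\ast\omega^\ast$, and bound $\omega_s$ by $\omega_c$ using $|(\mathcal{L}\eta)(i\omega_s)|\le 1$. The extra details you supply (the a priori bound $|\lambda|\le|a|+1$ ruling out roots entering from infinity, the Hurwitz-type root-counting, and the explicit exclusion of $\omega^\ast=0$ via $D_{\rho^\ast}(0)=a+1>0$) are exactly what the paper delegates to continuity of the roots and Theorem 2.19 of St\'ep\'an, so no gap remains.
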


\begin{proof} Suppose that the distribution $\eta$ is unstable, i.e.~that the characteristic equation has roots $\lambda$ with $\Re(\lambda) \geq 0$. Consider the family of scaled distributions $\eta_\rho$. The roots of the characteristic equation $D_\rho=0$ depend continuously on the parameter $\rho$ and roots with positive real parts can only appear by crossing the imaginary axis. The scaled distribution $\eta_{\rho}$ is stable for $\rho=0$ (the only root is $\lambda=-(a+1)<0$) and unstable for $\rho=1$. Hence there exists a critical value $0<\rho \leq 1$ at which $\eta_{\rho}$ loses its stability, and this happens when the characteristic equation $D_\rho(\lambda)=0$ has a pair of imaginary roots $\lambda = \pm i\omega$, with $\omega \geq 0$. Splitting the characteristic equation in real and imaginary parts, we have
\begin{equation}\label{partDrho}
\left\{\begin{array}{rcl}
\Re(D_{\rho}(i \omega)) & =  &\displaystyle\int_{0}^{\infty} \cos(\omega \tau) d\eta_{\rho}(\tau) + a = 0, \\ 
\Im(D_{\rho}(i \omega)) & = &\omega - \displaystyle\int_{0}^{\infty} \sin(\omega \tau) d\eta_{\rho}(\tau) = 0.
\end{array}\right.
\end{equation}
Since $-\omega$ satisfies the above system, we only look from now on and
throughout this manuscript to positive values of $\omega$. The upper bound on $\omega$, $\omega_c=\sqrt{1-a^2}$, is obtained by applying Cauchy-Schwartz inequality,
\begin{equation*}
a^2+\omega^2 = \Bigl( \int_{0}^{\infty} \cos(\omega \tau) d\eta_{\rho}(\tau) \Bigr)^2 + \Bigl( \int_{0}^{\infty} \sin(\omega \tau) d\eta_{\rho}(\tau) \Bigr)^2 \leq 1.
\end{equation*}
Rewriting (\ref{partDrho}) in term of $\eta$, we have
\begin{displaymath}
\int_{0}^{\infty} \cos(\omega \rho \tau) d\eta(\tau) = -a, \qquad \int_{0}^{\infty} \sin(\omega \rho \tau) d\eta(\tau) = \omega.
\end{displaymath}
Finally, setting $\omega_s:=\rho \omega$, we obtain $0 < \omega_s \leq \omega \leq \omega_c$ and
\begin{displaymath}
C(\omega_s) = \int_{0}^{\infty} \cos(\omega_s \tau) d\eta(\tau) = -a, \qquad
S(\omega_s) = \int_{0}^{\infty} \sin(\omega_s \tau) d\eta(\tau) = \omega \geq \omega_s.
\end{displaymath}
This completes the proof.
\end{proof}
Proposition \ref{pr:os} provides a sufficient condition for asymptotic stability, stated in the following corollary.
\begin{corollary}\label{th:cor}
The distribution $\eta$ is stable if one of the two following conditions is satisfied: 
\begin{itemize}
\item[(i)] $C(\omega)>-a$ for all $\omega \in [0,\omega_c]$, 
\item[(ii)] $C(\omega)=-a$, for $\omega \in \left]0,\omega_c \right]$, implies that $S(\omega)<\omega$.
\end{itemize}
\end{corollary}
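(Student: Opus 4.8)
The plan is to read Corollary~\ref{th:cor} as (essentially) the contrapositive of Proposition~\ref{pr:os}, so that almost no new work is required beyond a short logical argument plus one remark about roots on the imaginary axis. I would argue by contradiction: suppose one of the hypotheses (i) or (ii) holds but $\eta$ is \emph{not} stable in the sense of Definition~\ref{def:stable}, i.e.\ the characteristic equation~(\ref{eq:ce}) has a root $\lambda$ with $\Re(\lambda)\geq 0$, and derive a contradiction.

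First I would dispose of roots with $\Re(\lambda)>0$: such a root makes $\eta$ unstable, so Proposition~\ref{pr:os} yields a frequency $\omega_s\in(0,\omega_c]$ with $C(\omega_s)=-a$ and $S(\omega_s)\geq\omega_s$. Since $(0,\omega_c]\subset[0,\omega_c]$, the equality $C(\omega_s)=-a$ contradicts (i) at once; and it contradicts (ii), because (ii) would then force $S(\omega_s)<\omega_s$. Next I would rule out a purely imaginary root $\lambda=i\omega$: splitting~(\ref{eq:ce}) into real and imaginary parts gives $C(\omega)=-a$ and $S(\omega)=\omega$, the Cauchy--Schwarz estimate from the proof of Proposition~\ref{pr:os} forces $a^2+\omega^2\le 1$ hence $|\omega|\le\omega_c$, and $\omega=0$ is excluded because $C(0)=1\neq -a$ when $a\in(-1,1)$. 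Thus $\omega\in(0,\omega_c]$ with $C(\omega)=-a$, contradicting (i) directly and contradicting (ii) since (ii) gives $S(\omega)<\omega$ while $S(\omega)=\omega$. In every case we reach a contradiction, so all roots of~(\ref{eq:ce}) have $\Re(\lambda)<0$ and $\eta$ is stable.

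I do not expect a genuine obstacle here: the entire content is carried by Proposition~\ref{pr:os}, and the only subtlety worth flagging is the gap between ``not unstable'' and ``stable'' in Definition~\ref{def:stable}, which is closed by the elementary imaginary-axis computation above. One may also observe that (i) is merely the special case of (ii) in which $C(\omega)=-a$ has no solution on $(0,\omega_c]$ (so the implication in (ii) is vacuously true), which is why the two hypotheses can be treated simultaneously.
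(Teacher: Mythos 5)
Your proposal is correct and matches the paper's intent: the corollary is stated there as a direct consequence (the contrapositive) of Proposition~\ref{pr:os}, which is exactly your argument. Your extra step ruling out purely imaginary roots is a sensible way to bridge the gap between ``unstable'' ($\Re(\lambda)>0$ in Definition~\ref{def:stable}) and ``not stable'' ($\Re(\lambda)\geq 0$), and it is consistent with the paper, whose proof of Proposition~\ref{pr:os} already treats instability as the existence of a root with $\Re(\lambda)\geq 0$.
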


The condition $S(\omega)<\omega$ is not necessary for stability, as there are cases where $S(\omega)\geq\omega$ even though the distribution is stable. This happens when an unstable distribution switches back to stability as $E$ is further increased (see \cite{beretta2002} or \cite{b1989}). 

\section{Stability of a distribution of discrete delays}\label{s:d}

In this section, we show that a distribution with $n$ discrete delays and mean $E$ is more stable than the distribution with a single discrete delay $E$. It is convenient to represent distributions of discrete delays by their densities. We denote a density of $n$ discrete delays $\tau_i \geq 0$, and weights $p_i > 0$, $i=1,...,n$, $n \geq 1$, as 
\begin{displaymath}
 f_n(\tau) = \sum_{i=1}^{n} p_i \delta(\tau-\tau_i)
 \end{displaymath}
where $\delta(\tau-\tau_i)$ is a Dirac mass at $\tau_i$, and
\begin{equation}
\sum_{i=1}^{n} p_i \tau_i = E, \quad \text{ and } \quad \sum_{i=1}^{n} p_i = 1. \label{eq:pi}
\end{equation} 
The characteristic equation associated with the density $f_n$ is $D_n(\lambda) = \lambda + a + \sum_{i=1}^n p_i e^{-\lambda \tau_i}=0$. Likewise, we denote
\begin{displaymath}
C_n(\omega)  = \sum_{i=1}^n p_i \cos(\omega \tau_i), \qquad
S_n(\omega)  = \sum_{i=1}^n p_i \sin(\omega \tau_i).
\end{displaymath}

Following Corollary \ref{th:cor}, for $f_n$ to be stable, it is enough to show that $S_n(\omega_s)<\omega_s$ whenever $C_n(\omega_s)=-a$, $\omega_s \leq \omega_c$. We now show that among all distributions satisfying $C_n(\omega_s)=-a$ for a fixed $\omega_s$, there exists a density $f^*$ that maximizes the values of $S_n(\omega_s)$. This density $f^*$ has only one positive delay, making it easy to show that $S^*(\omega_s)<\omega_s$. This would imply that all discrete delay distributions are stable. 

\begin{definition} \label{def:c}
We define the constants $c \approx 0.7246$ and $\theta_c \approx 2.3311$, where $c$ is the smallest positive value such that $\cos(\theta) \geq 1 - c \theta$ for all $\theta>0$, found by solving the two equations $c=\sin(\theta)$ and $1-\theta \sin(\theta) = \cos(\theta)$ for $c>0$, $\theta>0$, and $\theta_c$ is the positive value for which $\cos(\theta) = 1-c\theta$.  We define the convex function $g(x): [0,\pi] \to [-1,1]$ by
\begin{align*}
g(x) = \begin{cases}
1 - cx, & 0\leq x < \theta_c, \\
\cos(x), & \theta_c \leq x \leq \pi.
\end{cases}
\end{align*}
\end{definition}

Convexity implies $g(px_1+(1-p)x_2) \leq pg(x_1) + (1-p)g(x_2)$, for $p \in [0,1]$, and $x_1, x_2 \in [0,\pi]$. In addition, we have $g(x) \leq \cos(x)$. 

The following lemmas show how to find the distribution that maximizes $S_n(\omega_s)$ for $n=2$. 

\begin{lemma}\label{lem:1}
Assume $a \in \left]-1,1\right[$ and $E>0$ satisfies
\begin{align}\label{eq:sc}
E < \frac{\arccos{(-a)}}{\omega_c},
\end{align}
with $\omega_c = \sqrt{1-a^2}$. Suppose that there exists $\omega_s \in [0,\omega_c]$ and a density $f_2$ with mean $E$, such that 
\begin{equation}\label{eq:cw}
C_2(\omega_s) := p_1\cos(\omega_s\tau_1)+p_2\cos(\omega_s\tau_2) = -a. 
\end{equation}
Then $\omega_s E < \theta_c$ and $\cos(\omega_s E) > -a$.
\end{lemma}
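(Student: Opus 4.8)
The plan is to exploit the convex minorant $g$ introduced in Definition~\ref{def:c}. First I would note that the hypothesis $C_2(\omega_s)=-a$ can be rewritten, using $g(x)\le\cos(x)$ on $[0,\pi]$ and the fact that $\cos$ is even and $2\pi$-periodic, as a lower bound: after reducing each $\omega_s\tau_i$ modulo $2\pi$ into $[0,\pi]$ (reflecting if necessary), we get $\sum_i p_i g(y_i) \le \sum_i p_i \cos(\omega_s\tau_i) = -a$, where $y_i\in[0,\pi]$. The slightly delicate point is that $\omega_s\tau_i$ need not lie in $[0,\pi]$ a priori; I would handle this by arguing that if some $\omega_s\tau_i \ge \pi$ then, because $\omega_s\le\omega_c=\sqrt{1-a^2}\le 1$, the mean constraint $p_1\tau_1+p_2\tau_2=E$ together with (\ref{eq:sc}) already forces a contradiction or reduces to an easier case — essentially $\omega_s E<\omega_c E<\arccos(-a)\le\pi$, so at least the ``average angle'' is below $\pi$, and one can then control the individual terms.

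Next, the key step: apply Jensen's inequality for the convex function $g$. Since $\sum p_i=1$ and $\sum p_i(\omega_s\tau_i)=\omega_s E$, convexity of $g$ gives
\begin{displaymath}
g(\omega_s E) \le p_1 g(\omega_s\tau_1) + p_2 g(\omega_s\tau_2) \le p_1\cos(\omega_s\tau_1)+p_2\cos(\omega_s\tau_2) = -a.
\end{displaymath}
So $g(\omega_s E)\le -a<1$, which by the definition of $g$ (namely $g(x)=1-cx$ for $x<\theta_c$) immediately yields $\omega_s E>0$ is consistent, and more importantly places $\omega_s E$ in a range where I can invert $g$. Then I would split into the two branches of $g$. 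If $\omega_s E\ge\theta_c$ we would be done with the first conclusion trivially; but I expect the real content is to rule this out or, conversely, to derive $\cos(\omega_s E)>-a$. From $g(\omega_s E)\le -a$ and the hypothesis (\ref{eq:sc}), i.e. $\omega_s E \le \omega_c E < \arccos(-a)$, I get $\cos(\omega_s E) > \cos(\arccos(-a)) = -a$ because $\cos$ is decreasing on $[0,\pi]$ and $\omega_s E\in[0,\pi)$ — that gives the second conclusion $\cos(\omega_s E)>-a$ directly, without even needing $g$.

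For the first conclusion $\omega_s E<\theta_c$: combining $g(\omega_s E)\le -a$ with the fact that on $[\theta_c,\pi]$ we have $g(x)=\cos(x)$, suppose for contradiction $\omega_s E\ge\theta_c$; then $\cos(\omega_s E)=g(\omega_s E)\le -a$, so $\omega_s E\ge\arccos(-a)$ (again using monotonicity of $\cos$), contradicting $\omega_s E<\omega_c E<\arccos(-a)$ from (\ref{eq:sc}) since $\omega_s\le\omega_c$. Hence $\omega_s E<\theta_c$. I would present it in this order: (1) reduce angles to $[0,\pi]$ and justify it via the bound $\omega_s E<\arccos(-a)\le\pi$; (2) apply Jensen to $g$ to get $g(\omega_s E)\le -a$; (3) derive $\cos(\omega_s E)>-a$ from (\ref{eq:sc}) and monotonicity; (4) derive $\omega_s E<\theta_c$ by the contradiction argument above.

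The main obstacle I anticipate is step (1): carefully justifying that we may replace $\omega_s\tau_i$ by representatives in $[0,\pi]$ while keeping the mean-type inequality usable — the issue is that folding $\omega_s\tau_i$ into $[0,\pi]$ changes its value, so the identity $\sum p_i(\omega_s\tau_i)=\omega_s E$ is lost and one only retains an inequality on the folded angles. The clean fix is probably to observe that $g(x)\le\cos(x)$ holds for \emph{all} $x\ge 0$ once we define $g$ suitably (or to note $g$ extends to a function bounded above by $\cos$ everywhere by periodicity considerations), OR to first prove that under (\ref{eq:sc}) one must have $\omega_s\tau_i$ small enough — but that is false in general for the larger $\tau_i$, so the periodicity/reflection argument seems unavoidable and is where the care is needed. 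Everything after that is routine monotonicity of $\cos$ on $[0,\pi]$ and the defining properties of $c$ and $\theta_c$.
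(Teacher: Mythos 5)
Your argument is essentially the paper's own proof: the conclusion $\cos(\omega_s E)>-a$ is obtained exactly as in the paper from (\ref{eq:sc}), $\omega_s\le\omega_c\le 1$ and the monotonicity of cosine on $[0,\pi]$, and $\omega_s E<\theta_c$ is obtained from the same Jensen-type bound $g(\omega_s E)\le p_1 g(\omega_s\tau_1)+p_2 g(\omega_s\tau_2)\le C_2(\omega_s)=-a$ combined with $g=\cos$ on $[\theta_c,\pi]$. The ``delicate point'' you flag (that $\omega_s\tau_i$ may exceed $\pi$, where $g$ is not defined) is in fact glossed over in the paper as well and closes routinely along the lines you suggest: either extend $g$ by the constant $-1$ on $[\pi,\infty)$, which keeps it convex and bounded above by $\cos$ so Jensen applies directly, or fold the angles into $[0,\pi]$, observe that folding only decreases the weighted mean, and use that $g$ is nonincreasing on $[0,\pi]$ together with $\omega_s E<\arccos(-a)\le\pi$ to recover $g(\omega_s E)\le -a$.
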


\begin{proof}
From inequality (\ref{eq:sc}), one gets $\omega_c E < \arccos(-a) < \pi$, so $\cos(\omega_c E) > -a$. Moreover, since $\omega_c\leq 1$, the inequality $\omega_s \leq \omega_c$ implies $\cos(\omega_s E) \geq \cos(\omega_c E)$. Consequently, $\cos(\omega_s E) > -a$ and, using (\ref{eq:cw}), we then deduce that $\cos(\omega_s E) > C_2(\omega_s)$. 

Furthermore, we have
\begin{align*}
C_2(\omega_s) := p_1 \cos(\omega_s \tau_1) + p_2 \cos(\omega_s \tau_2)
  \geq p_1 g(\omega_s \tau_1) + p_2 g(\omega_s \tau_2)
  \geq g(\omega_s E).
\end{align*}
The first inequality comes from the definitions of $c$ and $g$ (see Definition \ref{def:c}): $\cos(x) \geq 1 - cx$ for $x\geq 0$. The second inequality is the convexity property of $g$. Thus, we deduce $\cos(\omega_s E) > g(\omega_s E)$. Since $g(x)=\cos(x)$ for $x \geq \theta_c$, this means that $\omega_s E < \theta_c$. 
\end{proof}


\begin{lemma}\label{th:fs} 
Assume $a \in \left]-1,1\right[$ and $E>0$ satisfies (\ref{eq:sc}). Suppose that there exists $\omega_s \in [0,\omega_c]$ and a density $f_2$ with mean $E$, such that equality (\ref{eq:cw}) is satisfied. Then there exists a unique density $f^*$ with two discrete delays $\tau_1^*$ and $\tau_2^*$, mean $E$, such that $\tau_1^*=0$ and $0 < \omega_s \tau_2^* \leq \theta_c \leq \pi$, and satisfying
\begin{align}
C_2^*(\omega_s) & = -a. \label{eq:cw*}
\end{align}
\end{lemma}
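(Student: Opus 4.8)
The plan is to treat the pair $(p_1,p_2,\tau_1,\tau_2)$ satisfying the two linear constraints in \eqref{eq:pi} together with the nonlinear constraint \eqref{eq:cw} as a point in a configuration space, and to deform it continuously so as to push one of the two delays down to $0$ while keeping the mean $E$ fixed and keeping $C_2(\omega_s)=-a$. Concretely, I would first invoke Lemma \ref{lem:1} to record the two facts we get for free: $\omega_s E < \theta_c$ and $\cos(\omega_s E) > -a$. The latter already rules out the degenerate case $\tau_1=\tau_2=E$ (a single Dirac at $E$), since that would force $C_2(\omega_s)=\cos(\omega_s E) > -a$, contradicting \eqref{eq:cw}; hence genuinely $\tau_1 \neq \tau_2$, say $\tau_1 < E < \tau_2$ after relabeling.

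Next I would parametrize the candidate distributions with $\tau_1^*=0$ directly: with $\tau_1^*=0$ the constraints become $p_1 + p_2 = 1$ and $p_2 \tau_2^* = E$, so $p_2 = E/\tau_2^*$ and $p_1 = 1 - E/\tau_2^*$; feasibility ($p_1 \ge 0$, i.e. $\tau_2^* \ge E$) must be checked. Plugging into \eqref{eq:cw*} gives the single scalar equation
\begin{align*}
\Phi(\tau_2) := \Bigl(1 - \frac{E}{\tau_2}\Bigr) + \frac{E}{\tau_2}\cos(\omega_s \tau_2) = -a,
\end{align*}
i.e. $\Phi(\tau_2) = 1 - \frac{E}{\tau_2}\bigl(1 - \cos(\omega_s \tau_2)\bigr)$, and the claim reduces to showing this equation has a unique root $\tau_2^* \ge E$ with $0 < \omega_s \tau_2^* \le \theta_c$. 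I would analyze $\Phi$ on the relevant interval: as $\tau_2 \to E^+$ we have $p_1 \to 0$ and $\Phi(E) = \cos(\omega_s E) > -a$ by Lemma \ref{lem:1}; as $\tau_2$ increases, $\frac{1}{\tau_2}(1-\cos(\omega_s\tau_2))$ is (for $\omega_s\tau_2$ in a suitable range) increasing, at least up to the first point where $\omega_s\tau_2$ reaches $\theta_c$ or $\pi$, so $\Phi$ is decreasing there; and I must exhibit a value where $\Phi$ drops to $-a$ before $\omega_s\tau_2$ exceeds $\theta_c$. The existence of such a crossing should follow by comparing with the given $f_2$: the assumed $f_2$ already achieves $C_2(\omega_s) = -a$ with its mean constraint, and the convexity estimate from Lemma \ref{lem:1} ($C_2(\omega_s) \ge g(\omega_s E)$, together with $g(x)=\cos x$ only for $x \ge \theta_c$) controls how far down one can drive $C_2$ while staying at mean $E$; I expect the cleanest route is an intermediate value argument showing $\Phi(\tau_2) = -a$ has a solution with $\omega_s\tau_2 \le \theta_c$ because $\Phi$ can be made to equal $C_2(\omega_s)=-a$ by continuity, and uniqueness because $\Phi$ is strictly monotone on $(E, \theta_c/\omega_s]$.

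The main obstacle I anticipate is the monotonicity/uniqueness claim for $\Phi$ on the window $\omega_s\tau_2 \in (0,\theta_c]$: one has to show $\tau_2 \mapsto \frac{1-\cos(\omega_s\tau_2)}{\tau_2}$ is strictly increasing there (equivalently that $h(u) = \frac{1-\cos u}{u}$ is increasing on $(0,\theta_c]$), which is a calculus fact — $h'(u)$ has the sign of $u\sin u - 1 + \cos u$, and one checks this is positive on $(0,\pi]$ — but it is exactly the kind of estimate the authors have pre-packaged via the constant $c$ and the function $g$, since $1 - cx \le \cos x$ is the tangency condition at $\theta_c$. So I would lean on Definition \ref{def:c} rather than reprove the inequality. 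A secondary subtlety is checking feasibility $\tau_2^* \ge E$ (so that $p_1^* \ge 0$) at the root; this should come out of $\Phi(E) > -a$ together with $\Phi$ decreasing. Once existence, uniqueness, feasibility, and the bound $\omega_s\tau_2^* \le \theta_c$ (hence $\le \pi$) are in hand, the lemma is proved; the strict inequality $\omega_s\tau_2^* > 0$ is immediate since $\tau_2^* \ge E > 0$ and $\omega_s > 0$ (the case $\omega_s = 0$ being excluded because then $C_2(0)=1 \neq -a$).
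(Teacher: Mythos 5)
Your overall route is the same as the paper's: set $\tau_1^*=0$, use $p_2^*=E/\tau_2^*$ to reduce \eqref{eq:cw*} to the scalar equation $\Phi(\tau_2)=-a$ (this is exactly the paper's Eq.~\eqref{eq:cos} rearranged), use the tangency constant $c$ and $\theta_c$ from Definition \ref{def:c}, and get feasibility $0<p_2^*<1$ from Lemma \ref{lem:1} via $\Phi(E)=\cos(\omega_s E)>-a$. The uniqueness part is fine: $\Phi(\tau_2)=1-\omega_s E\,h(\omega_s\tau_2)$ with $h(u)=(1-\cos u)/u$, and $h$ is strictly increasing on $(0,\theta_c]$, so $\Phi$ is strictly monotone on the window you use. (Minor slip: $u\sin u-1+\cos u$ is \emph{not} positive on all of $(0,\pi]$ — it equals $-2$ at $u=\pi$ and vanishes at $\theta_c$; it is positive only on $(0,\theta_c)$. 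Luckily that is all you need.)

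The genuine gap is the existence step. You write that the crossing ``should follow by comparing with the given $f_2$ \dots\ because $\Phi$ can be made to equal $C_2(\omega_s)=-a$ by continuity,'' but continuity of $\Phi$ alone gives nothing: you need an endpoint at which $\Phi\leq -a$ \emph{inside} the window $\omega_s\tau_2\in(0,\theta_c]$, and that is precisely where the hypothesis on $f_2$ enters quantitatively. The one-line estimate that closes it (and is the crux of the paper's proof) is
\begin{equation*}
-a=C_2(\omega_s)=\sum_{i=1}^2 p_i\cos(\omega_s\tau_i)\;\geq\;\sum_{i=1}^2 p_i\bigl(1-c\,\omega_s\tau_i\bigr)=1-c\,\omega_s E,
\end{equation*}
which, since $h(\theta_c)=c$, says exactly that $\Phi(\theta_c/\omega_s)=1-c\,\omega_s E\leq -a$; together with $\Phi(E)=\cos(\omega_s E)>-a$ and $\omega_s E<\theta_c$ (Lemma \ref{lem:1}), the intermediate value theorem then yields the root $\tau_2^*\in\left]E,\theta_c/\omega_s\right]$, and your monotonicity argument gives uniqueness. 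Equivalently, the paper phrases this as the solvability criterion $c\geq(1+a)/(\omega_s E)$ for $\cos(x)=1-(1+a)x/(\omega_s E)$. You gesture at this (``controls how far down one can drive $C_2$''), but as written the decisive inequality is never stated, so the existence claim is not yet proved; adding the displayed comparison makes your argument complete and essentially identical to the paper's.
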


\begin{proof}
Suppose there exists a density $f_2'$ with two discrete delays $\tau_1'$ and $\tau_2'$, weights $p_1'$ and $p_2'$, mean $E$, satisfying $\tau_1' = 0$ and $\tau_2' > 0$. Necessarily, $p_2' \tau_2' = E$ (so $f_2'$ has mean $E$). We are going to show that 
\begin{equation}\label{eq:C'}
C_2'(\omega_s) = -a.
\end{equation}

By using $p_1' = 1 - p_2'$ and $p_2' = E/\tau_2'$, Eq.~(\ref{eq:C'}) is equivalent to
\begin{equation}\label{eq:cos}
\cos(\omega_s \tau_2') = 1 - \frac{1+a}{\omega_s E}  \omega_s \tau_2'.
\end{equation}
From the definitions of the constant $c$ and the function $g$ (Definition \ref{def:c}), the equation $\cos(x)=1-(1+a)x/\omega_s E$ has positive solutions in $[0,\pi]$  if and only if $\cos(x)\geq 1-cx$, that is
\begin{equation}\label{eq:cs}
c \geq \frac{1+a}{ \omega_s E}.
\end{equation}
To see that inequality (\ref{eq:cs}) is indeed satisfied, one can note that, using (\ref{eq:cw}),
\begin{align*}
-a=C_2(\omega_s) = \sum_{i=1}^2 p_i \cos(\omega_s \tau_i)  \geq \sum_{i=1}^2 p_i (1-c \omega_s \tau_i)  = 1 - c \omega_s E,
\end{align*}
so  $-a \geq 1-c \omega_s E$. Thus (\ref{eq:cs}) holds true. Consequently Eq.~(\ref{eq:cos}) has at least one solution satisfying $0\leq \omega_s \tau_2'\leq \pi$. 

Moreover, since $\theta_c$ is a tangent point (see Definition \ref{def:c}), there is exactly one solution satisfying 
$$
0\leq \omega_s \tau_2' < \theta_c < \pi.
$$
Denote by $\tau_2^*$ the smallest value of $\tau_2'$ that solves Eq.~(\ref{eq:cos}), and define $f^* = \sum_{i=1}^2 p_i^* \delta(\tau-\tau_i^*)$, with $p_2^* = E/\tau_2^*$, $p_1^* = 1 - p_2^*$, and $\tau_1^* = 0$. From the definition of $\tau_2^*$, $f^*$ exists and is unique. It remains to show that $f^*$ is a well-defined density, that is $p_2^*\in[0,1]$. Since $\tau_2^*$ is the smallest and unique positive solution in the interval $[0, \theta_c]$ of (\ref{eq:cos}), the sign of $\cos(x) - (1 - (1+a)x/(\omega_s E))$ determines whether $x$ is smaller or larger than $\tau_2^*$ in the interval $[0, \theta_c]$. From Lemma \ref{lem:1}, $\omega_s E < \theta_c$ and $\cos(\omega_s E) > -a$, or formulated equivalently, $\cos(\omega_s E) >   1 - (1+a)/(\omega_s E) \omega_s E$. Thus, $\cos(\omega_s E) -  (1 - (1+a)\omega_s E/(\omega_s E) ) > 0$, which implies that $\omega_s E < \omega_s \tau_2^*$. Since $E = p_2^* \tau_2^*$, we obtain the result $0 < p_2^* < 1$, which shows that $f^*$ is a well-defined density.
\end{proof}

\begin{lemma}\label{th:S} 
Assume $a \in \left]-1,1\right[$ and $E>0$ satisfies (\ref{eq:sc}). Suppose that there exists $\omega_s \in [0,\omega_c]$ and a density $f_2$ with mean $E$, such that equality (\ref{eq:cw}) is satisfied. Then for any density $f_2$ with mean $E$ and satisfying Eq.~(\ref{eq:cw}), we have 
\begin{align*}
S_2(\omega_s) \leq S^*(\omega_s),
\end{align*}
where the density $f^*$ is defined in Lemma \ref{th:fs}.
\end{lemma}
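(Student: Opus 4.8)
The plan is to bound $\sin$ from above by a single function of the form $\ell(\theta)=A+B\theta+\Gamma\cos\theta$ that is tangent to $\sin$ exactly at the two atoms of the density $f^{*}$ furnished by Lemma \ref{th:fs}, and then to conclude by matching moments. Recall that $f^{*}=p_{1}^{*}\delta_{0}+p_{2}^{*}\delta_{\tau_{2}^{*}}$ has mean $E$, satisfies $C_{2}^{*}(\omega_{s})=-a$, and has $s:=\omega_{s}\tau_{2}^{*}\in(0,\theta_{c}]$ with $p_{1}^{*},p_{2}^{*}\in(0,1)$. I would look for real numbers $A,B,\Gamma$ such that $\ell(\theta)\ge\sin\theta$ for all $\theta\ge0$, with equality at $\theta=0$ and at $\theta=s$. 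Once such an $\ell$ is in hand, integrating the pointwise bound $\sin(\omega_{s}\tau)\le\ell(\omega_{s}\tau)$ against an arbitrary admissible density $f_{2}$ (mean $E$, $C_{2}(\omega_{s})=-a$) gives $S_{2}(\omega_{s})\le A+B\,\omega_{s}E-\Gamma a$; doing the same against $f^{*}$ gives \emph{equality}, because $f^{*}$ is supported precisely at the two points where $\sin=\ell$. Hence $S_{2}(\omega_{s})\le S^{*}(\omega_{s})$. Geometrically, $\ell$ is a plane in $(\theta,\cos\theta,\sin\theta)$-space supporting from above the convex hull of the curve $\theta\mapsto(\theta,\cos\theta,\sin\theta)$ and containing the chord joining its points at $\theta=0$ and $\theta=s$; the density $f^{*}$ sits on that chord while $f_{2}$ sits on the curve below the plane.

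To pin down $\ell$, I would impose the three natural conditions $\ell(0)=0$, $\ell(s)=\sin s$, and $\ell'(s)=\cos s$ (the last being forced, since the interior point $s>0$ must be a minimum of $\ell-\sin$). Solving,
\[
A=-\Gamma,\qquad B=\Gamma\sin s+\cos s,\qquad \Gamma=\frac{\sin s-s\cos s}{\,s\sin s+\cos s-1\,}.
\]
The remaining and decisive task is to verify $h(\theta):=\ell(\theta)-\sin\theta\ge0$ on $[0,\infty)$. My plan here is to establish three elementary sign facts: (a) $\Gamma>0$, which needs the numerator $\sin s-s\cos s>0$ (true on $(0,\pi)$) and, more importantly, the denominator $q(s):=s\sin s+\cos s-1>0$ on $(0,\theta_{c})$; (b) $B>1$, which after a half-angle substitution reduces to $\tan(s/2)>s/2$; and (c) $\arctan\Gamma<s$, which for $s<\pi/2$ reduces to $\sin s<s$ and is automatic otherwise. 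Granting (a)--(c), one computes $h''(\theta)=\sin\theta-\Gamma\cos\theta=\sqrt{1+\Gamma^{2}}\,\sin\!\bigl(\theta-\arctan\Gamma\bigr)$, so $h$ is convex on $[\arctan\Gamma,\arctan\Gamma+\pi]$, an interval whose interior contains $s$ and on which $h(s)=h'(s)=0$, whence $h\ge0$ there; $h$ is concave on $[0,\arctan\Gamma]$ with $h(0)=0$ and $h(\arctan\Gamma)\ge0$, whence $h\ge0$ there by the chord bound; and $h\ge0$ then propagates to all of $[0,\infty)$ via the quasi-periodicity $h(\theta+2\pi)=h(\theta)+2\pi B$ together with $B>0$ and the alternation of concave and convex arcs.

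I expect the main obstacle to be exactly this global verification, and in particular keeping track of where the hypotheses enter. The bound $s=\omega_{s}\tau_{2}^{*}<\theta_{c}$ from Lemma \ref{th:fs} (which itself rests on Lemma \ref{lem:1} and the constant $c$ of Definition \ref{def:c}) is essential: since $q(\theta_{c})=\theta_{c}(\sin\theta_{c}-c)=0$, allowing $s\ge\theta_{c}$ would flip the sign of $\Gamma$ and tilt the supporting plane the wrong way. The borderline case $s=\theta_{c}$, where $\Gamma\to+\infty$ and $\ell$ degenerates (after rescaling) into the line $1-c\theta$ attached to the convex function $g$, would be treated separately by a limiting argument, or simply excluded by the strict inequality already obtained in the proof of Lemma \ref{th:fs}. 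A secondary point to watch is that $\omega_{s}\tau$ ranges over all of $[0,\infty)$, so the majorization genuinely must hold on the whole half-line and not merely on a bounded interval; this is why the quasi-periodic propagation step, and hence the sign $B>0$, cannot be skipped.
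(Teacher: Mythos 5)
Your proposal is correct, and it is a genuinely different argument from the paper's. The paper treats the two-delay family directly as a constrained optimization: writing $u=\omega_s\tau_1$, $v=\omega_s\tau_2$, it uses the unit-circle chord picture to discard all but the smallest root $v_1(u)$ of $C(u,v)=-a$, proves $v_1(u)\le\pi$, and then shows $\frac{d}{du}S(u,v_1(u))<0$, which after simplification rests on the inequality $2-2\cos z-z\sin z>0$ on $(0,\pi]$; the maximum is thus attained at $u=0$, i.e.\ at $f^*$. You instead produce a dual certificate: the majorant $\ell(\theta)=A+B\theta+\Gamma\cos\theta\ge\sin\theta$ on $[0,\infty)$ touching $\sin$ exactly at $0$ and $s=\omega_s\tau_2^*$, integrated against the three moment constraints (mass, mean, $C(\omega_s)=-a$). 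The computations you would need do check out: $\Gamma=(\sin s-s\cos s)/(s\sin s+\cos s-1)>0$ precisely because $s<\theta_c$ (the denominator vanishes at $\theta_c$, which is where Lemma \ref{th:fs} and Lemma \ref{lem:1} are essential, and the proof of Lemma \ref{th:fs} gives the strict inequality needed to avoid the degenerate case); $B=(1-\cos s)/(s\sin s+\cos s-1)$ is positive, and your stronger claim $B>1$ is indeed equivalent to $\tan(s/2)>s/2$, though only $B>0$ is needed for the quasi-periodic propagation; $\arctan\Gamma<s$ reduces to $\sin s<s$ for $s<\pi/2$ and is automatic otherwise; and the concave/convex/quasi-periodic bookkeeping then gives $h=\ell-\sin\ge0$ on all of $[0,\infty)$, with equality of the resulting bound for $f^*$ because its support is the contact set. (The case $\omega_s=0$ is vacuous since $C(0)=1=-a$ would force $a=-1$.) What the two approaches buy: the paper's is elementary calculus confined to two-atom densities, matching exactly what Theorem \ref{th:n} consumes; yours, amusingly built on the same elementary inequality $2-2\cos s>s\sin s$, proves more — $f^*$ dominates \emph{any} delay distribution with mean $E$ and $C(\omega_s)=-a$, not just two-atom ones — so, carried through, it would let one bypass much of the delay-merging construction in the $n>2$ step of Theorem \ref{th:n}.
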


\begin{proof}
We recast the problem in a slightly different way. Consider a density with two discrete delays $\tau_1$ and $\tau_2$ and mean $E$, such that $C_2(\omega_s)=-a$. Writing $u=\omega_s \tau_1$, $v=\omega_s \tau_2$ and $T = \omega_s E$, we can express the weights $p_i$ in terms of $u$ and $v$:
\begin{equation*}
p_1 = \frac{v-T}{v-u} \quad\text{and}\quad p_2 = \frac{T-u}{v-u}.
\end{equation*}
By convention, $0 \leq u < T < v$. We consider $C_2(\omega_s)$ and $S_2(\omega_s)$ as functions of $u$ and $v$; $C, S: [0,T)\times (T,\infty) \to [-1,1]$ with
\begin{align}
C(u,v) & = \frac{v-T}{v-u} \cos(u) + \frac{T-u}{v-u} \cos(v), \label{eq:cuv} \\
S(u,v) & = \frac{v-T}{v-u} \sin(u)  + \frac{T-u}{v-u} \sin(v). \label{eq:suv}
\end{align}
The subscripts $2$ have been dropped to ease the reading. Equation (\ref{eq:suv}) is to be maximized with the restriction $C(u,v)=-a$. The equation $C(u,v)=-a$ defines a one-parameter family of solutions. For $u$ fixed, $C(u,v)=-a$ has at most a finite number of solutions $v$, with $v>T$. As shown in Lemma \ref{th:fs}, there is always a solution when $u=0$ (equivalent to $\tau_1=0$). Fixing $u$ and labeling the solutions in increasing order $v_i$, $i=1,...,r$, $r \geq 1$, we have $S(u,v_1) \geq S(u,v_i)$ for all $i$. Indeed, the equality $C(u,v)=-a$ can be rewritten as
\begin{equation}\label{eq:cosu}
-\frac{a+\cos(u)}{T-u} v + \frac{au + T \cos(u)}{T-u} = \cos(v).
\end{equation}
Since $0\leq u<T<\pi$, $\cos(u)$ is decreasing, $\cos(u)>\cos(T)$, and $\cos(T)>-a$ (From Lemma \ref{lem:1}, we know that $\cos(\omega_s E)>-a$), so $\cos(u)+a>0$. Eq. (\ref{eq:cosu}) writes
\begin{equation}\label{eq:cosv}
\cos(v)=\alpha(u) - \beta(u) v,
\end{equation}
where $\beta(u)>0$ for all $u\in \left[0,T \right)$, $\alpha(0)=1$ (in case $u=0$, Eq. (\ref{eq:cosv}) reduces to Eq. (\ref{eq:cos})) and $\alpha(u)$ is increasing for $u\in \left[0,T \right)$. The slope of the right hand side of (\ref{eq:cosv}) is negative, $\cos(v_i)$ is decreasing with solutions $v_i$ of (\ref{eq:cosv}) (Figure \ref{f:eqcosvi-chords}A). One may note that the points $\bigl(C(u,v_i),S(u,v_i)\bigr)$ are at the intersection of the chord $i$ between the unit circle points $\bigl(\cos(u),\sin(u)\bigr)$ and $\bigl(\cos(v_i),\sin(v_i)\bigr)$ and the vertical secant at $-a$. From (\ref{eq:cuv}) with $C(u,v_i)=-a$, it is easy to see that $\cos(v_i)<-a$ since $\cos(u)>-a$. By displaying the above mentioned chords and the vertical secant on a unit circle (Figure \ref{f:eqcosvi-chords}B), it follows that all the chords $i$, $i>1$, lie below chord 1, and thus $S(u,v_1)\geq S(u,v_i)$, $i\geq1$. 

\begin{figure}
\begin{center}A\includegraphics[trim= 1cm 0.5cm 1cm 1cm]{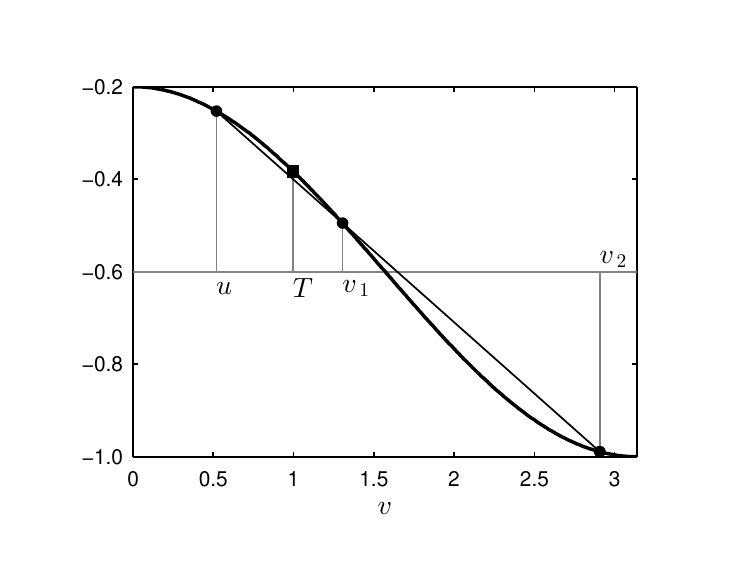}
B\includegraphics[trim= 1cm 0.5cm 1cm 1cm]{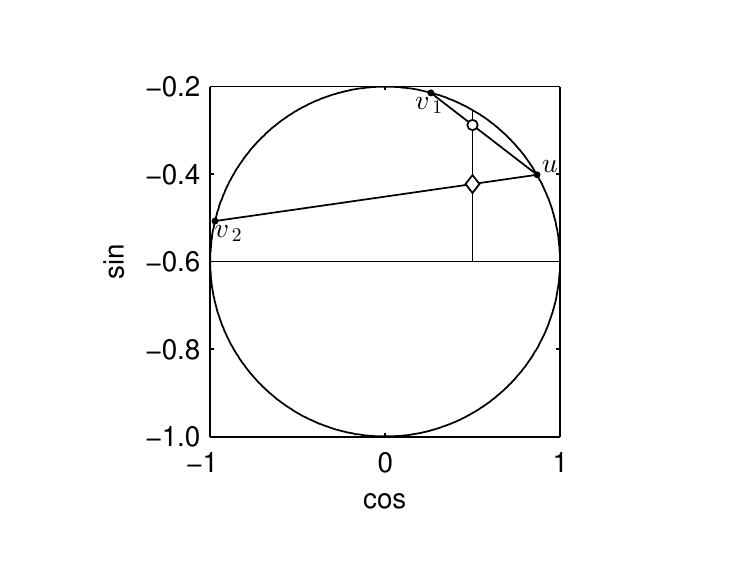}\end{center}
\caption{Two solutions in $v$ of Eq. ~(\ref{eq:cosv}) for fixed $u$, with density $f_2(\tau) = 0.8 \delta(\tau-0.625) + 0.2 \delta(\tau-3.5)$ and $a=-0.5$ (parameter chosen to satisfy the conditions of Lemma \ref{lem:1}). Then $\omega_c = \sqrt{1-a^2}=0.8660$, $E=1.2 < \arccos(-a)/\omega_c=1.2092$. Eq. ~(\ref{eq:cw}) was solved for $\omega_s=0.8308 < \omega_c$ to yield $T=\omega_s E = 0.9969$ and $u = \omega_s \tau_1 = 0.5192$). The solution $v_2 = \omega_s \tau_2 = 2.9078$ corresponds to the density $f_2$ and $v_1 = 1.3056$, to the density $f^* = 0.3925 \delta(\tau-0.625) + 0.6075 \delta (\tau - 1.5715)$.  (A) Solutions along the cosine. (B) Solutions parametrized on the circle, illustrating that at the intersection of the secant at $-a$, the value of $S(u,v_1)$ ($\circ$) is strictly larger than $S(u,v_2)$ ($\diamond$).
}\label{f:eqcosvi-chords}
\end{figure}

It is therefore enough to look, for each $u$, at the smallest solution $v_1$ of the equation $C(u,v)=-a$. The solution, which exists for $u \in \left[0, T \right)$, can be parameterized by $u$, with $v_1 = v_1(u) = \min\{v|C(u,v)=-a\}$. At $u=0$, the solution $v_1(0) = \omega_s \tau_2^*$. Therefore, we need to show that $S(0,v_1(0))$ maximizes $S(u,v_1(u))$. The total derivative of $S$ with respect to $u$ is
\begin{displaymath}
\frac{d}{d u} S(u,v_1(u)) = \frac{\partial S}{\partial u} + \frac{\partial S}{\partial v}\frac{d v_1}{d u}.
\end{displaymath}  
If $\partial S/\partial v<0$, the total derivative is strictly negative if and only if
\begin{equation}\label{eq:dvdu}
\frac{d v_1}{d u} > - \frac{\partial S}{\partial u} / \frac{\partial S}{\partial v}.
\end{equation} 
The partial derivative with respect to $v$ is
\begin{displaymath}
\frac{\partial S}{\partial v} = \frac{T-u}{v-u} \Biggl[ \frac{\sin(u) - \sin(v)}{v-u} + \cos(v) \Biggr].
\end{displaymath}
One can see that $v_1$ always satisfies $v_1(u) \leq \pi$. Indeed, if one assumes by contradiction $v_1(u)>\pi$, then first Eq. (\ref{eq:cosv}) has no root on the interval $[T, \pi]$, and second, since $\cos(T)>-a=\alpha(u)-\beta(u)T$, one gets $\cos(v)>\alpha(u)-\beta(u)v$ for $v\in[T,\pi]$. It follows that for all $v>\pi$,
$$
\alpha(u)-\beta(u)v<\alpha(u)-\beta(u)\pi<\cos(\pi)=-1\leq \cos(v),
$$
and Eq. (\ref{eq:cosv}) has no root, yielding a contradiction.

The sine function is strictly concave on the interval $[0,\pi]$ and this implies that
\begin{equation}\label{eq:conc}
\sin(u) < \sin(v) + (u-v) \frac{d}{d v}\sin(v), 
\end{equation}
or equivalently that $(\sin(u) - \sin(v))/(v-u) + \cos(v)<0$, for all $0 \leq u < v \leq \pi$. This shows that $\partial S/\partial v<0$. Now,
\begin{align*}
\frac{d v_1}{du} & = \frac{v-T}{T-u} \frac{\cos(v)-\cos(u)+(v-u)\sin(u)}{\cos(u)-\cos(v)-(v-u)\sin(v)}, \\
- \frac{\partial S}{\partial u} / \frac{\partial S}{\partial v} & = \frac{v-T}{T-u} \frac{\sin(v)-\sin(u)-(v-u)\cos(u)}{\sin(u)-\sin(v)+(v-u)\cos(v)}.
\end{align*}
Inequality (\ref{eq:dvdu}) can be re-expressed as
\begin{displaymath}
(v-u) \bigl[ 2 - 2\cos(v-u) - (v-u) \sin(v-u) \bigr] > 0.
\end{displaymath}
It can be verified that this inequality is satisfied for $v-u=z \in \left(0,\pi \right]$. The left-hand side vanishes when $z \to 0$, and the derivative is strictly positive for $0 < z \leq \pi$:
\begin{align*}
\frac{d}{dz} \bigl[ 2 - 2 \cos(z) - z \sin(z)\bigl]  = \sin(z) - z \cos(z) > 0.
\end{align*}
The last inequality is obtained with inequality (\ref{eq:conc}). Therefore, $dS/du<0$ and $S$ is maximized for $u=\omega_s \tau_1^* = 0$ and $v_1(0)=\omega_s \tau_2^*<\pi$.
\end{proof}

Now that we established the existence of a density $f^*$ with two delays, one equal to zero the other one positive, and mean $E$ which maximizes the quantity $S_2(\omega_s)$, we prove in the next theorem the stability of all densities with $n$ discrete delays and mean $E$ satisfying (\ref{eq:sc}).

\begin{theorem}\label{th:n} Assume $a \in \left]-1,1 \right[$ and $E>0$ satisfies inequality (\ref{eq:sc}). Let $f_n$ be a discrete density with $n \geq 1$ delays and mean $E$, then the density $f_n$ is stable.
\end{theorem}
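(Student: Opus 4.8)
The plan is to reduce the case of a general discrete density $f_n$ with $n$ delays to the two-delay situation already analyzed in Lemmas~\ref{lem:1}--\ref{th:S}, and then to dispatch the two-delay case using Hayes' theorem. By Corollary~\ref{th:cor}(ii), it suffices to show that whenever $\omega_s\in\,]0,\omega_c]$ satisfies $C_n(\omega_s)=-a$, we have $S_n(\omega_s)<\omega_s$. So fix such an $\omega_s$ (if no such $\omega_s$ exists, stability follows from Corollary~\ref{th:cor}(i) and we are done).

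The first step is a reduction from $n$ delays to $2$ delays that preserves the mean and the value of $C(\omega_s)$ while not decreasing $S(\omega_s)$. Concretely, I would argue that among all discrete densities with mean $E$ and $C(\omega_s)=-a$, the supremum of $S(\omega_s)$ is attained (or at least approached) by a density supported on at most two points. One clean way: the set of probability measures on $[0,\infty)$ with prescribed mean $E$ and prescribed value $\int\cos(\omega_s\tau)\,d\eta = -a$ is a convex set cut out by (effectively) two moment constraints in addition to total mass $1$; by a standard Carath\'eodory/extreme-point argument (three linear constraints — total mass, mean, cosine-integral), its extreme points are supported on at most three atoms, and the linear functional $\eta\mapsto S_n(\omega_s)=\int\sin(\omega_s\tau)\,d\eta$ is maximized at an extreme point. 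A slightly more hands-on version, matching the style of the preceding lemmas: given any $f_n$ with $n\ge 3$, pick three atoms $\tau_i$ and perturb their weights within the two-dimensional kernel of the constraints $\sum p_i = \text{const}$, $\sum p_i\tau_i=\text{const}$, $\sum p_i\cos(\omega_s\tau_i)=\text{const}$; moving along this line until a weight hits $0$ lowers the number of atoms by one without changing $C_n(\omega_s)$ or the mean and changing $S_n(\omega_s)$ monotonically, so we may push in the direction that does not decrease $S_n(\omega_s)$ and iterate down to two atoms. Either way, we obtain a two-delay density $f_2$ with the same mean $E$, still satisfying $C_2(\omega_s)=-a$, and with $S_2(\omega_s)\ge S_n(\omega_s)$.

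The second step invokes the machinery already proved. Since this $f_2$ has mean $E$ satisfying~(\ref{eq:sc}) and satisfies~(\ref{eq:cw}) at $\omega_s\le\omega_c$, Lemma~\ref{th:fs} produces the special density $f^*$ with $\tau_1^*=0$, $0<\omega_s\tau_2^*\le\theta_c\le\pi$, and $C_2^*(\omega_s)=-a$, and Lemma~\ref{th:S} gives $S_2(\omega_s)\le S^*(\omega_s)$. Chaining inequalities, $S_n(\omega_s)\le S^*(\omega_s)$. The final step is to show $S^*(\omega_s)<\omega_s$ directly for this one-positive-delay density. Since $\tau_1^*=0$ contributes nothing to $S^*$, we have $S^*(\omega_s)=p_2^*\sin(\omega_s\tau_2^*)$ with $p_2^*=E/\tau_2^*$, so $S^*(\omega_s)=\dfrac{E}{\tau_2^*}\sin(\omega_s\tau_2^*)=\dfrac{\omega_s E}{\omega_s\tau_2^*}\sin(\omega_s\tau_2^*)=\omega_s\cdot\dfrac{\sin(\omega_s\tau_2^*)}{\omega_s\tau_2^*}<\omega_s$, because $0<\omega_s\tau_2^*\le\pi$ forces $\sin(\omega_s\tau_2^*)/(\omega_s\tau_2^*)<1$ (strictly, since the argument is positive). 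Hence $S_n(\omega_s)<\omega_s$ for every admissible $\omega_s$, and Corollary~\ref{th:cor} yields that $f_n$ is stable.

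The main obstacle is making the first step — the reduction from $n$ atoms to $2$ atoms — both rigorous and monotone in the right direction; one must be careful that the intermediate densities remain genuine probability densities (nonnegative weights, atoms in $[0,\infty)$) and that the two-delay density still satisfies the hypotheses~(\ref{eq:sc}) and~(\ref{eq:cw}) needed to apply Lemmas~\ref{th:fs} and~\ref{th:S}. A secondary subtlety is the boundary case $\omega_s\tau_2^*=\pi$ in the last inequality, and the degenerate cases where the reduction collapses to a single atom (then $S_n(\omega_s)=\sin(\omega_s E)$ and one appeals directly to~(\ref{eq:sc})), but these are routine to handle.
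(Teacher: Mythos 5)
There is a genuine gap, and it sits exactly where you placed the weight of your argument: the final inequality $S^*(\omega_s)<\omega_s$. Your algebra is wrong: from $S^*(\omega_s)=p_2^*\sin(\omega_s\tau_2^*)$ with $p_2^*=E/\tau_2^*$ you get $S^*(\omega_s)=\frac{\omega_s E}{\omega_s\tau_2^*}\sin(\omega_s\tau_2^*)=\omega_s E\cdot\frac{\sin(\omega_s\tau_2^*)}{\omega_s\tau_2^*}$, not $\omega_s\cdot\frac{\sin(\omega_s\tau_2^*)}{\omega_s\tau_2^*}$ (you silently set $E=1$). The sinc bound therefore only yields $S^*(\omega_s)<\omega_s E$, which proves nothing unless $E\leq 1$, and condition (\ref{eq:sc}) allows $E>1$ — the paper's own running example has $a=-0.5$, $E=1.2$, and as $a\to 1$ the bound in (\ref{eq:sc}) allows arbitrarily large $E$. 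Note also that your final step never uses the constraint $C^*(\omega_s)=-a$ nor (\ref{eq:sc}) beyond $\omega_s\tau_2^*\leq\pi$; the needed inequality genuinely couples $p$, $a$ and $E$ through that constraint, which is why the paper's $n=2$ case goes through the explicit computation $S^*(\omega_s)=\sqrt{p^2-(-a+p-1)^2}$, the relation $\omega_s=\arccos(-(a+1-p)p^{-1})/\tau^*$, and the monotonicity of $y\mapsto\arccos(-y)/\sqrt{1-y^2}$ to conclude $S^*(\omega_s)<\omega_s$. Without an argument of this kind your chain of inequalities does not close.

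The reduction from $n$ delays to two is also not as cheap as you make it. With three linear constraints (total mass, mean, cosine moment), the Carath\'eodory/extreme-point argument gives maximizers supported on at most \emph{three} atoms, not two; and your ``hands-on'' variant has a dimension problem: with three atoms and three constraints the admissible weight perturbations form, generically, only the trivial kernel, so moving weights alone cannot take you below three atoms. To reach a two-atom density one must move the atom \emph{locations}, and this is the actual content of the paper's $n>2$ argument: fold all delays into $[0,\pi/\omega_s]$ (preserving $C$, increasing $S$, decreasing the mean), merge pairs of positive delays using the convexity of $g$ from Definition \ref{def:c}, average the remaining positive delays using concavity of the sine on $[0,\pi]$, and finally shift the positive delay back (using that $\sin(x)/x$ is decreasing on $(0,\pi)$) to restore $C=-a$ while further increasing $S$. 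Your step 2 (invoking Lemmas \ref{th:fs} and \ref{th:S} once a genuine two-delay density with mean satisfying (\ref{eq:sc}) and $C_2(\omega_s)=-a$ is in hand) is fine, but both the reduction and, above all, the terminal inequality need the missing work.
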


\begin{proof} 
Case $n=1$. Single delay distributions ($n=1$) are stable by Theorem \ref{th:hayes}. 

Case $n=2$. Consider a density $f_2$ with two delays $\tau_1 < \tau_{2}$. If $C_2(\omega_s)>-a$ for every $\omega_s \in [0, \omega_c]$, Corollary \ref{th:cor} states that $f_2$ is stable. Suppose $C_2(\omega_s)=-a$ for a value $\omega_s \in [0,\omega_c]$. From Lemmas \ref{th:fs} and \ref{th:S}, there exists a density $f^*$ with $\tau_1^*=0$ and $0<\tau_2^*\leq \pi/\omega_s$ such that $C^*(\omega_s)=C_2(\omega_s)$ and $S^*(\omega_s) \geq S_2(\omega_s)$. 

Since $S^*$ maximizes the value of $S_2$, if we are able to show that any distribution $f^*$ with a zero and a positive delay, and $C^*(\omega_s)=-a$, satisfies $S^*(\omega_s)<\omega_s$, then from Corollary \ref{th:cor} all distributions with two delays will be stable. 

Let the density $f^*(\tau) = (1-p) \delta(\tau) + p \delta(\tau-\tau^*)$ with $p \in \left(0,1 \right]$ and $\tau^* \in [E,\pi/\omega_s]$. We have $C^*(\omega_s) = 1-p + p \cos(\omega_s \tau^*) = -a$. We must show that $S^*(\omega_s) = p \sin(\omega_s) < \omega_s$. Summing up the squares of the cosine and the sine, we then obtain
$
p^2 = (-a+p-1)^2 + S^{*2}(\omega_s),
$ 
so
$
S^*(\omega_s) = \sqrt{p^2 - (-a+p-1)^2}.
$
Since $E$ satisfies inequality (\ref{eq:sc}), then
$
\tau^* < \arccos(-a)/p\sqrt{1-a^2}.
$
From $C^*(\omega_s)=-a$ we get $\omega_s=\arccos( - (a+1-p) p^{-1})/\tau^*$. Thus,
\[
p \sqrt{1-a^2}\frac{\arccos \bigl( - (a+1-p) p^{-1} \bigr)}{\arccos(-a)} < \frac{\arccos \bigl( - (a+1-p) p^{-1} \bigr)}{\tau^*} = \omega_s.
\] 
Since $(a+1-p)p^{-1} \geq a$ for $p \in \left(0,1 \right]$ and $a \in \left]-1,1 \right[$, we have the following inequality
\[
\frac{\arccos(-a)}{\sqrt{1-a^2}} \leq \frac{\arccos \bigl( - (a+1-p) p^{-1} \bigr)}{\sqrt{1-\bigl( (a+1-p) p^{-1} \bigr)^2}},
\]
which implies
\[
p\sqrt{1-\bigl( (a+1-p) p^{-1} \bigr)^2}\leq p \sqrt{1-a^2}\frac{\arccos \bigl( - (a+1-p) p^{-1} \bigr)}{\arccos(-a)}.
\]
Thus, 
\[
S^*(\omega_s) =  \sqrt{p^2 - (-a+p-1)^2} \leq p \sqrt{1-a^2} \frac{\arccos \bigl( - (a+1-p) p^{-1} \bigr)}{\arccos(-a)} < \omega_s.
\]
This completes the proof for the case $n=2$. 

Case $n>2$. For densities $f$ with $n>2$ delays, the strategy is also to find an upper bound for the value of $S(\omega_s)$ via a new distribution $f^*$ that keeps $C(\omega_s)=-a$ constant. If, for the new distribution, $S(\omega_s) \leq S^*(\omega_s)< \omega_s$ holds true, then Corollary \ref{th:cor} can be applied. The construction of $f^*$ requires two or three steps. In the first step, all delays $\tau_i > \pi/\omega_s$ are replaced by smaller delays $\tau_i' < \pi/\omega_s$, in order to use the concavity of the sine function on the interval $[0,\pi]$ as done in the proof of Lemma \ref{th:S}, in the following way:
\begin{displaymath}
\tau_i' =\begin{cases}
\tau_i - 2 k_i \pi / \omega_s & \text{if $\sin(\omega_s \tau_i)\geq 0$},\\
2(k_i+2)\pi/\omega_s - \tau_i & \text{if $\sin(\omega_s \tau_i) < 0$},
\end{cases}
\end{displaymath}
where $k_i = \max\{j|2j\pi/\omega_s \leq \tau_i \}$. This transformation preserves $C(\omega_s)$: $\cos(\omega_s \tau_i') = \cos(\omega_s \tau_i)$, and ensures that $S(\omega_s)$ increases: $\sin(\omega_s \tau_i') = |\sin(\omega_s \tau_i)|$. That way, we obtain an associated delay density $f'$ with $C'(\omega_s)=-a$, $S'(\omega_s) \geq S(\omega_s)$, $E' \leq E$ and $\tau_i' \leq \pi/\omega_s$. 

In the second step, we reduce the number of strictly positive delays. All pairs of delay $\tau_i' < \tau_j'$ for which the inequality 
\begin{align}\label{eq:belowcos}
\frac{p_i \cos(\omega_s \tau_i')+p_j \cos(\omega_s \tau_j')}{p_i+p_j} \leq \cos\biggl(\omega_s  \frac{p_i \tau_i^{\prime}+p_j \tau_j^{\prime}}{p_i+p_j}\biggr)
\end{align}
holds are iteratively replaced by one positive and one vanishing delay, as done in Lemma \ref{th:fs}. We note that inequality (\ref{eq:belowcos}) reduces to 
\begin{displaymath}
C(\omega_s) = p_1\cos(\omega_s\tau_1) + p_2\cos(\omega_s\tau_2) \leq \cos(\omega_s E)
\end{displaymath}
for a two discrete delay distribution, with delays $\tau_1$ and $\tau_2$ satisfying (\ref{eq:pi}). This transformation preserves the values of mean $E'$ and $C'(\omega_s)$, and increases the value of $S'(\omega_s)$. This step is repeated until one of the two situations occurs: (i) There remains one density $f^*$ with exactly one delay $\tau_1^* = 0$ and one delay $\tau_2^* > 0$. Then the inequality $S^*(\omega_s)<\omega_s$ follows from the first part of the proof. Therefore, $S(\omega_s) \leq S'(\omega_s) \leq S^*(\omega_s) <\omega_s$, and, by Corollary \ref{th:cor} implies that $f$ is stable. (ii) There remains a density $\bar f$ with one delay $\bar \tau_1 = 0$ and two or more delays $\bar \tau_k > 0$, $k=2,\dots,m$, $m \geq 3$, such that
\[
\frac{\bar p_i \cos(\omega_s \bar \tau_i)+\bar p_j \cos(\omega_s \bar \tau_j)}{\bar p_i+\bar p_j} > \cos\left(\omega_s \frac{\bar p_i \bar \tau_i+\bar p_j \bar \tau_j}{\bar p_i+\bar p_j}\right),
\]  
for each pair $i \neq j \in 2,\dots,m$. Since $\sum_{k=1}^m \bar p_k =1$, the strictly positive delays now satisfy
\begin{align}
\sum_{k=2}^m \frac{\bar p_k \cos(\omega_s \bar \tau_k)}{1-\bar p_1} > \cos\left( \omega_s \sum_{k=2}^m \frac{\bar p_k \bar \tau_k}{1-\bar p_1} \right), \label{eq:sineq}
\end{align}
while $\bar C(\omega_s) := \sum_{k=1}^m \bar p_k \cos(\omega_s \bar \tau_k) = -a \leq  \cos(\omega_s \bar E)$. 

The third step is to replace all positive delays $\bar \tau_k$, $k=2,\dots,m$, with the single mean delay
\[
\tau_2'' = \sum_{k=2}^m \frac{\bar p_k \bar \tau_k}{1-\bar p_1}. 
\]
Because the sine function is concave on the interval $[0,\pi]$, any averaging of delays can only increase the value of $S$. We now have a density $f''$ with $\tau_1''=0$ and $\tau_2''>0$, $p_1''=\bar p_1$ and $p_2''=1-\bar p_1$, $C''(\omega_s) < \bar C(\omega_s)$ (from inequality (\ref{eq:sineq})), $E'' = \bar E \leq E$, and $S''(\omega_s) \geq \bar S(\omega_s)$. We now replace $\tau_2''$ with a delay $\tau_2^{*} < \tau_2''$, so as to obtain a density $f^*$ with $C^{*}(\omega_s)=\bar C(\omega_s)=-a$, and $E^*= E''$. 

Indeed, this consists in finding $(p_2^*,\tau_2^*)$ such that $p_2^*\tau_2^*=E''=p_2''\tau_2''$, $\tau_2^*<\tau_2''$, and $1-p_2^*+p_2^*\cos(\omega_s\tau_2^*)=-a$. Hence, this is equivalent to finding $\tau_2^*\in \left] E'',\tau_2'' \right[$ such that 
$$
\chi(\tau_2^*):= 1-\frac{p_2''\tau_2''}{\tau_2^*}+\frac{p_2''\tau_2''}{\tau_2^*}\cos(\omega_s\tau_2^*)=-a.
$$
Since $\chi$ is continuous, with $\chi(E'')=\cos(\omega_s E'')=\cos(\omega_s \bar{E})\geq -a$, and $\chi(\tau_2'')=C_2''(\omega_s)<-a$, there is at least one $\tau_2^*\in \left] E'',\tau_2'' \right[$ satisfying the above conditions, with $p_2^* := p_2''\tau_2''/\tau_2^*$. Moreover, since $\tau_2^*<\tau_2''$ and the function $\sin(x)/x$ is decreasing on $(0,\pi)$, one obtains, using $p_2^*\tau_2^*=E''=p_2''\tau_2''$, that $p_2^*\sin(\omega_s\tau_2^*)\geq p_2''\sin(\omega_s\tau_2'')$, or equivalently, $S^{*}(\omega_s) \geq S''(\omega_s)$.

Consequently, this last change of delay has the effect of increasing the value $S^{*}(\omega_s) \geq S''(\omega_s)$, while maintaining the condition $C^{*}(\omega_s)=-a$. Since the mean $E^*$ of density $f^{*}$ satisfies inequality (\ref{eq:sc}), we have $S^{*}(\omega_s) < \omega_s$ as shown for the case $n=2$. Therefore $S(\omega_s) \leq S'(\omega_s) \leq \bar S(\omega_s) \leq S''(\omega_s) \leq S^{*}(\omega_s) < \omega_s$. Corollary \ref{th:cor} implies that $f$ is stable. 
\end{proof}

\section{Stability of a general distribution of delays}\label{s:g}
We now show that the stability of discrete delays implies the stability of general distributions. First we need to bound the roots of the characteristic equation for general distributed delays.

\begin{lemma}\label{th:mu}
Assume $a \in \left]-1,1 \right[$ and $E>0$ satisfies inequality (\ref{eq:sc}). Let $\eta$ be a delay distribution with mean $E$ and characteristic equation $D(\lambda)=0$. There exists a sequence of distributions $\{\eta_{n}\}_{n \geq 1}$ with mean $E$, such that $\eta_{n}$ converges weakly to $\eta$ as $n\to \infty$, and $\lambda$ is a root of the characteristic equation if and only if there exists a sequence of characteristic roots $\lambda_n$ for $\eta_{n}$ such that $\lim_{n \to \infty} \lambda_n = \lambda$. If $\{\mu_n\}_{n \geq 1}$ is a sequence of real parts of characteristic roots $\lambda_n$ for $\eta_{n}$, $D_{n}(\lambda_n)=0$, then $\limsup_{n \to \infty} \mu_n < 0$.
\end{lemma}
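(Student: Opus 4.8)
The plan is to exhibit the approximating sequence explicitly and then transfer the stability estimate from discrete distributions (Theorem \ref{th:n}) to the limit. First I would construct $\eta_n$ by discretizing $\eta$: partition $[0,\infty)$ into intervals and place a point mass at a suitable node in each interval with weight equal to the $\eta$-measure of that interval, then correct one node (or add one extra node, or slightly shift the largest node) so that the mean is \emph{exactly} $E$ for every $n$. Because inequality (\ref{eq:nu}) holds, the tail of $\eta$ decays exponentially, so the truncation error and the mean-correction can be made to vanish as $n\to\infty$; this gives $\eta_n \Rightarrow \eta$ weakly with $\int_0^\infty \tau\,d\eta_n(\tau) = E$. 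One should also arrange that $\int_0^\infty e^{\nu\tau}\,d\eta_n(\tau)$ stays uniformly bounded (easy from the construction on the truncated support), so that each $\eta_n$ falls under Theorem \ref{thm:asympt} and Theorem \ref{th:n}.

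Next I would establish the root-correspondence: $\lambda$ is a root of $D(\lambda)=0$ if and only if there is a sequence $\lambda_n$ with $D_n(\lambda_n)=0$ and $\lambda_n\to\lambda$. Since $D_n(\lambda) = \lambda + a + \int_0^\infty e^{-\lambda\tau}\,d\eta_n(\tau)$ and weak convergence together with the uniform exponential-moment bound gives $D_n(\lambda)\to D(\lambda)$ uniformly on compact subsets of a half-plane $\Re(\lambda) > -\nu'$ for suitable $\nu'>0$, each $D_n$ is analytic there, and one invokes Hurwitz's theorem (or Rouch\'e): isolated zeros of $D$ are limits of zeros of $D_n$, and conversely any convergent sequence of zeros of $D_n$ converges to a zero of $D$. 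Care is needed because the domain is a half-plane rather than a bounded region, but restricting to compact discs around each candidate root handles this.

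Finally, for the $\limsup$ claim: suppose $\{\mu_n\}$ with $D_n(\lambda_n)=0$, $\mu_n=\Re(\lambda_n)$, and suppose for contradiction that $\limsup \mu_n \geq 0$. Each $\eta_n$ is a discrete distribution with mean $E$ satisfying (\ref{eq:sc}), so by Theorem \ref{th:n} it is stable, i.e. $\mu_n < 0$ for every $n$; thus we would need $\mu_n \to 0$ along a subsequence. The imaginary parts $\omega_n = \Im(\lambda_n)$ of roots on or near the imaginary axis are bounded: from $D_n(\lambda_n)=0$ one gets $|\lambda_n + a| = |\int_0^\infty e^{-\lambda_n\tau}d\eta_n(\tau)| \le \int_0^\infty e^{-\mu_n\tau}d\eta_n(\tau)$, which for $\mu_n\to 0^-$ is bounded (again using the uniform exponential-moment bound), forcing $|\omega_n|$ bounded. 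Passing to a convergent subsequence $\lambda_n \to i\omega_0$ with $\omega_0\in\mathbb{R}$, uniform convergence gives $D(i\omega_0)=0$, so $i\omega_0$ is a root of the limiting characteristic equation on the imaginary axis. But the hypothesis that $E$ satisfies (\ref{eq:sc}) is exactly the condition under which the discrete-delay equation (Theorem \ref{th:hayes}) — and by Theorem \ref{th:n} every discrete approximant — is asymptotically stable; the point of this lemma is to prepare the contradiction that a limiting imaginary root would produce. Concretely, from Proposition \ref{pr:os} and Corollary \ref{th:cor} applied to $\eta$, a root $i\omega_0$ with $\omega_0>0$ would require $C(\omega_0) = -a$ with $S(\omega_0)\geq\omega_0$, where $\omega_0 \le \omega_c$; approximating $C,S$ by $C_n,S_n$ and using Theorem \ref{th:n}'s estimate $S_n(\omega)<\omega$ whenever $C_n(\omega)=-a$ (uniformly, via the $f^*$ bound $S_n(\omega)\le S^*(\omega)<\omega$ with a margin depending only on $a,E$) contradicts $S(\omega_0)\ge\omega_0$. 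If $\omega_0=0$, then $D(0) = a+1 > 0$, impossible. Either way $\limsup_{n\to\infty}\mu_n < 0$.

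\textbf{Main obstacle.} The delicate point is quantitative: Theorem \ref{th:n} gives $S_n(\omega_s)<\omega_s$ strictly but a priori without a uniform gap, so I must extract from the construction of $f^*$ in Lemmas \ref{th:fs}--\ref{th:S} and the case $n=2$ computation in Theorem \ref{th:n} a lower bound $\omega_s - S^*(\omega_s) \ge c(a,E) > 0$ depending only on $a$ and $E$ (not on $n$), and likewise control the $\omega_s$ at which $C_n(\omega_s)=-a$ away from the problematic regime — otherwise the limit could sit exactly on the boundary $S(\omega_0)=\omega_0$ and the $\limsup$ could fail to be strictly negative. Establishing this uniform margin, together with the uniform exponential-moment bound that legitimizes all the analytic-convergence arguments in a fixed half-plane, is the crux of the proof.
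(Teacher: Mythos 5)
Your construction of the approximating sequence and the root correspondence (locally uniform convergence of $D_n$ to $D$ plus Hurwitz/Rouch\'e) follow essentially the same route as the paper, which argues more directly by estimating $\lvert D(\lambda_n)\rvert\to 0$ and $\lvert D_n(\lambda)\rvert\to 0$ under weak convergence; up to that point your proposal is fine, and if anything more explicit about the uniform exponential moment. The problem is the final assertion $\limsup_n\mu_n<0$, which is exactly where you stop. Theorem \ref{th:n} gives $\mu_n<0$ for every $n$, hence only $\limsup_n\mu_n\le 0$, and the entire content of the lemma is to exclude the marginal case $\limsup_n\mu_n=0$, i.e. a limiting root $i\omega_0$ of $D$ with $C(\omega_0)=-a$ and $S(\omega_0)=\omega_0$. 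You correctly observe that the strict inequalities $S_n(\omega)<\omega$ do not survive the limit and that a uniform-in-$n$ margin would be needed, but you leave that margin as an acknowledged obstacle; moreover, even granting it, you would still need to transfer the constraint to the approximants: weak convergence only gives $C_n(\omega_0)\to-a$, not $C_n(\omega_n)=-a$ for some $\omega_n\to\omega_0$, so an additional intermediate-value/continuity step is required. As written, the decisive step of the proof is therefore missing.

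The paper closes this gap by a different device that needs no quantitative estimate: if $\limsup_n\mu_n=0$, the root correspondence places a characteristic root of $\eta$ on the imaginary axis; one then perturbs inside the scaled family (\ref{eq:scale}), replacing $(a,1)$ by a nearby $(\bar a,\rho)$ so that this root moves strictly into the right half-plane while the strict inequality (\ref{eq:sc}) still holds for the mean $\rho E$ and parameter $\bar a$; the discrete approximants of the perturbed distribution are then eventually unstable, contradicting Theorem \ref{th:n}. In other words, the strictness of (\ref{eq:sc}) is spent on a parameter perturbation rather than on a uniform gap. If you want to pursue your quantitative route, it can in fact be completed from the paper's own estimates: the case $n=2$ computation in Theorem \ref{th:n} yields the multiplicative bound $S^*(\omega_s)\le\bigl(E\,\omega_c/\arccos(-a)\bigr)\,\omega_s$, where $E\,\omega_c/\arccos(-a)<1$ by (\ref{eq:sc}), and the inequality $\cos x\ge 1-cx$ (with $c$ as in Definition \ref{def:c}) forces any solution of $C_n(\omega_s)=-a$ for a mean-$E$ discrete density to satisfy $\omega_s\ge(1+a)/(cE)$, which together give a gap depending only on $a$ and $E$; but this, plus the transfer step above, must be written out --- neither is in your proposal.
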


\begin{proof}
Existence of a sequence $\{\eta_{n}\}_{n \geq 1}$ of distributions with $n$ delays and mean $E$, such that $\eta_{n}$ converges weakly to $\eta$ as $n\to \infty$ is rather straightforward, this sequence can be built explicitly. We do not detail this part here. 

Consider $\lambda_n = \mu_n + i \omega_n$ a root of the characterisitic equation for $\eta_{n}$. The mean $E$ satisfies inequality (\ref{eq:sc}), so $\mu_n < 0$. Then,
\begin{align*}
\Bigl\lvert D(\lambda_n) \Bigl\lvert & = \Bigl\lvert \lambda_n + a + \int_0^{\infty} e^{-\lambda_n \tau} d \eta(\tau) \Bigl\lvert \\
&  = \Bigl\lvert \lambda_n + a + \int_0^{\infty} e^{-\lambda_n \tau} d [\eta(\tau)-\eta_{n}(\tau)] + \int_0^{\infty} e^{-\lambda_n \tau} d \eta_{n}(\tau) \Bigr\rvert \\
& = \Bigl\lvert \int_0^{\infty} e^{-\lambda_n \tau} d [\eta(\tau)-\eta_{n}(\tau)] \Bigl\lvert \to 0,
\end{align*}
as $n \to \infty$ by weak convergence. Thus any converging sub-sequence of roots converges to a root for $\eta$. The same way, if $\lambda$ is a root for $\eta$,  
\begin{align*}
\Bigl\lvert D_{n}(\lambda) \Bigl\lvert & = \Bigl\lvert \lambda + a + \int_0^{\infty} e^{-\lambda \tau} d \eta_{n}(\tau) \Bigl\lvert \\
&  = \Bigl\lvert \lambda + a + \int_0^{\infty} e^{-\lambda \tau} d [\eta_{n}(\tau)-\eta(\tau)] + \int_0^{\infty} e^{-\lambda \tau} d \eta(\tau) \Bigr\rvert \\
& = \Bigl\lvert \int_0^{\infty} e^{-\lambda \tau} d [\eta_{n}(\tau)-\eta(\tau)] \Bigl\lvert \to 0,
\end{align*}
as $n \to \infty$. Convergence is guaranteed by inequality (\ref{eq:nu}). Thus each root $\lambda_n$ lies close to a corresponding root $\lambda$, and $\mu = \limsup_{n \to \infty} \mu_n$, with $\mu_n$ real part of a characteristic root $\lambda_n$, is the real part of a characteristic root for $\eta$. Since $\mu_n<0$, we have that $\mu$ is non-positive. Suppose $\mu=0$ and consider the scaled distribution $\eta_{a,\rho}(\tau)$ defined by (\ref{eq:scale}), and the associated real parts $\mu_{a, \rho}$, where the subscript $a$ is there to emphasize the dependence of the stability on the parameter $a$ in the characteristic equation. Then, by continuity, there exists $(\bar a, \rho)$ in an $\varepsilon$-neighborhood of the point $(a,1)$ for which $\mu_{\bar a, \rho}>0$. For sufficiently small $\varepsilon>0$, inequality (\ref{eq:sc}) is still satisfied:
\begin{equation*}
\rho E < \frac{\arccos(- \bar a)}{\sqrt{1-\bar a^2}}.
\end{equation*}
Additionally, the scaled discrete distributions $\eta_{n,\bar a, \rho}$ converge weakly to $\eta_{\bar a, \rho}$, so that the real parts $\mu_{n, \bar a, \rho}$ of the roots converging to $\mu_{\bar a, \rho}$ become eventually positive. That is, there is $N>1$ such that $\eta_{n,\bar a, \rho}$ is unstable for all $n>N$, a contradiction to Theorem \ref{th:n}, since inequality (\ref{eq:sc}) still holds. Therefore $\mu<0$. 
\end{proof}

\begin{theorem}\label{th:main} Assume $a \in \left]-1,1 \right[$ and $E>0$ satisfies inequality (\ref{eq:sc}). Let $\eta$ be a delay distribution with mean $E$, then the distribution $\eta$ is stable.
\end{theorem}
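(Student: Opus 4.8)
The plan is to reduce the general distribution case to the discrete case already established in Theorem \ref{th:n}, using the approximation machinery built in Lemma \ref{th:mu}. Concretely, given an arbitrary delay distribution $\eta$ with mean $E$ satisfying inequality (\ref{eq:sc}), I would invoke Lemma \ref{th:mu} to obtain a sequence $\{\eta_n\}_{n\geq 1}$ of discrete distributions, each with mean $E$, converging weakly to $\eta$. By Theorem \ref{th:n}, each $\eta_n$ is stable, so every characteristic root $\lambda_n$ of $D_n$ has $\Re(\lambda_n)=\mu_n<0$.

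The heart of the argument is then a proof by contradiction on the roots of $D(\lambda)=0$. Suppose $\eta$ is unstable, so there is a root $\lambda_0$ with $\Re(\lambda_0)=\mu_0\geq 0$. By the correspondence in Lemma \ref{th:mu}, a root of $D$ can only arise as the limit of a sequence of roots $\lambda_n$ of $D_n$; hence there is a sequence $\lambda_n\to\lambda_0$ with $\mu_n\to\mu_0\geq 0$. But Lemma \ref{th:mu} also asserts precisely that $\limsup_{n\to\infty}\mu_n<0$ for any such sequence of real parts of characteristic roots of the $\eta_n$. This directly contradicts $\mu_n\to\mu_0\geq 0$. Therefore every root of $D(\lambda)=0$ has $\Re(\lambda)<0$, which by Definition \ref{def:stable} means $\eta$ is stable. (One should check the mild compactness point that the root $\lambda_0$ is a genuine limit of roots $\lambda_n$ — this is exactly the ``if and only if'' statement in Lemma \ref{th:mu}, and the boundedness of such roots follows from the a priori bound $|\lambda|\leq$ const coming from $\Re(\lambda)$ bounded above together with $|\Im(\lambda)|\leq$ const implied by the characteristic equation and $\int e^{\nu\tau}d\eta<\infty$.)

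The main obstacle is not in this final stitching — which is essentially a two-line deduction once Lemma \ref{th:mu} is in hand — but rather lies entirely in Lemma \ref{th:mu} and, behind it, Theorem \ref{th:n}; all the genuine work (the geometric maximization of $S_n(\omega_s)$, the reduction to a two-delay distribution with one delay at zero, and the weak-approximation/continuity argument ruling out a purely imaginary limiting root via the scaling $\eta_{a,\rho}$) has already been carried out. So for the statement of Theorem \ref{th:main} itself, the only thing to be careful about is that the hypotheses of Lemma \ref{th:mu} and Theorem \ref{th:n} are met verbatim: $a\in\,]-1,1[$ and $E$ satisfies (\ref{eq:sc}) — which are exactly the hypotheses assumed here — and that ``stable'' is being used consistently in the sense of Definition \ref{def:stable}, i.e.\ all characteristic roots in the open left half-plane, which by Theorem \ref{thm:asympt} is equivalent to asymptotic stability of $x=0$ for Eq.~(\ref{eq:xx}). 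Thus the proof is short: apply Lemma \ref{th:mu}, note each $\eta_n$ is stable by Theorem \ref{th:n}, and conclude $\limsup\mu_n<0$ forces all roots of $D$ to have negative real part.
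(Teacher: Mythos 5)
Your proof is correct and follows essentially the same route as the paper: the paper's own argument is exactly to take the weakly convergent sequence of discrete distributions from Lemma \ref{th:mu}, use Theorem \ref{th:n} (already embedded in that lemma) for the discrete approximants, and conclude from $\limsup_{n\to\infty}\mu_n<0$ that every root of $D(\lambda)=0$ has negative real part. Your contradiction phrasing and the remark on boundedness of the approximating roots are just a slightly more explicit rendering of the same two-line deduction.
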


\begin{proof}
Consider a sequence of distributions with $n$ delays $\{\eta_{n}\}_{n \geq 1}$ where $\eta_{n}$ converges weakly to $\eta$. By Lemma \ref{th:mu}, the roots of the characteristic equation of $\eta$ have strictly negative real parts. Therefore $\eta$ is stable.
\end{proof}

The results obtained above provide the most complete picture of the stability of Eq.~(\ref{eq:x}) when the only information about the distribution of delays is the mean. These results are summarized in the following theorem and illustrated in Fig. \ref{f:chart}.

\begin{figure}
\begin{center}\includegraphics[trim=0.5cm 0.5cm 0.5cm 0.5cm]{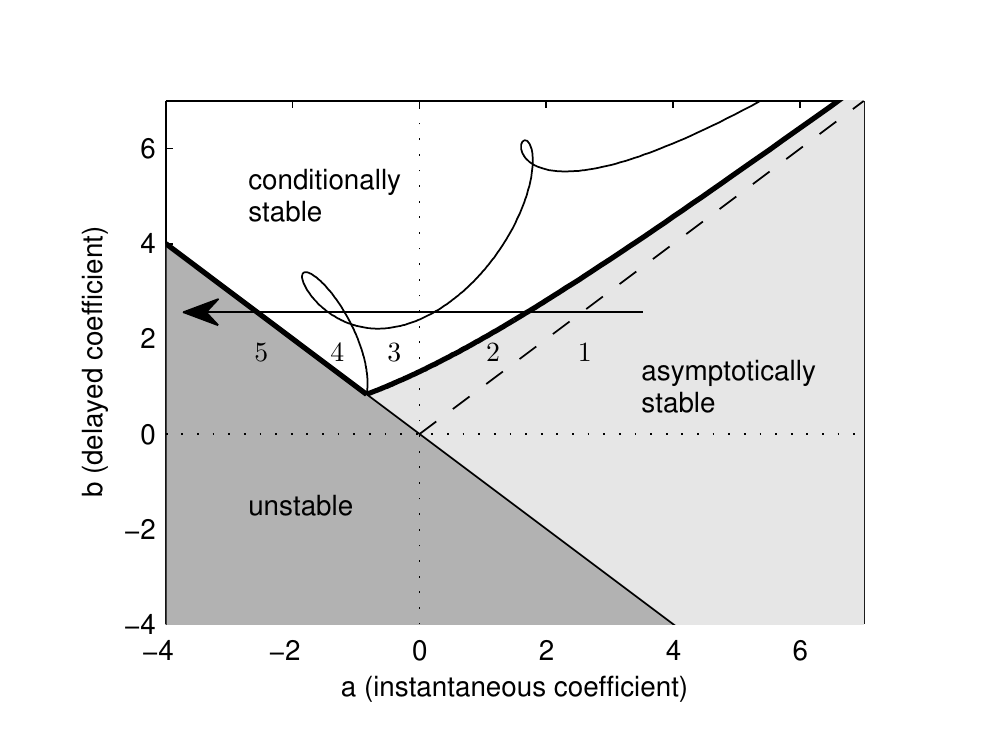}\end{center}
\caption{Stability chart of distributions of delay in the $(a,b)$ plane, obtained from Theorem \ref{th:stab}. The asymptotic stability region is composed of regions (1) to (3): a delay-independent stability region (\emph{light grey}, (1)), delimited by the condition $a \geq |b|$; a discrete-delay stability region (\emph{conditionally stable, light-grey}, (2)), delimited by condition \ref{eq:E-cor}; and a distributed-delay-dependent stability region (\emph{white}, (3)). The instability region is composed of a distributed-delay-dependent instability region (\emph{conditionally stable, white}, (4)) and a delay-independent instability region (\emph{unstable, dark grey}, (5)), delimited by the curve $b=-a$.  The discrete and distributed delay stability boundaries intersect at point $(a=-1/E,b=1/E)$.  The arrow pointing leftward shows that there exists a region, for $b>1/E$, where a stable steady state can become unstable through a decrease of the value of $a$, independently of the shape of the delay distribution. The distributed delay is $f(\tau) = 0.8 \delta(\tau-0.625) + 0.2 \delta(\tau-3.5)$, with mean delay $E=1.2$ (parameters as in Figure \ref{f:eqcosvi-chords}).}\label{f:chart}
\end{figure}

\begin{theorem}\label{th:stab}
The zero solution of Eq.~(\ref{eq:x}) is asymptotically stable if $a>-b$ and $a \geq |b|$, or if $b>|a|$ and the mean E of $\eta$ satisfies 
\begin{equation}\label{eq:E-cor}
 E < \frac{\arccos(-a/b)}{\sqrt{b^2-a^2}}.
\end{equation}
The zero solution of Eq.~(\ref{eq:x}) may not be asymptotically stable (depending on the particular distribution) if $b>|a|$ and
\begin{equation*}
 E \geq \frac{\arccos(-a/b)}{\sqrt{b^2-a^2}}.
\end{equation*}
The zero solution of Eq.~(\ref{eq:x}) is unstable if $a \leq -b$. 
\end{theorem}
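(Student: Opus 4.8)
The plan is to assemble Theorem~\ref{th:stab} essentially as a corollary of the preceding results, after reducing the general case to the normalized equation (\ref{eq:xx}). First I would dispose of the delay-independent regions using the two statements (i) and (ii) listed right after Theorem~\ref{th:hayes}: when $a\le -b$, item (i) gives a positive real characteristic root, so $x=0$ is unstable; when $a\ge|b|$ and $a>-b$, item (ii) gives that there is no root with positive real part, and combining this with inequality~(\ref{eq:nu}) and Theorem~\ref{thm:asympt} yields exponential asymptotic stability. This already covers everything outside the region $b>|a|$, which is exactly where the shape of the distribution can matter.

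Next I would treat the region $b>|a|$. By symmetry we may assume $b>0$ (the case $b<0$ reduces to it — or one observes directly that $b>|a|$ forces $b>0$), and perform the timescale change $t\to bt$ described before~(\ref{eq:xx}): this sends $a\to a/b$, $b\to 1$, and $\eta(\tau)\to\eta(b\tau)$, so the mean of the rescaled distribution is $bE$, and the condition $b>|a|$ becomes $a/b\in\,]-1,1[\,$. In these rescaled variables, condition~(\ref{eq:E-cor}), namely $E<\arccos(-a/b)/\sqrt{b^2-a^2}$, is precisely inequality~(\ref{eq:sc}) for the rescaled mean $bE$ and rescaled parameter $a/b$ (multiply numerator and denominator inside by $b$). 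Hence Theorem~\ref{th:main} applies verbatim: the rescaled distribution is stable, i.e.\ all characteristic roots of~(\ref{eq:xx}) have negative real part. Since the timescale change is a bijection on the half-planes $\Re(\lambda)<0$ and $\Re(\lambda)\ge0$ (it only rescales $\lambda$ by the positive factor $b$), the original characteristic equation~(\ref{eq:ce}) likewise has all roots in $\Re(\lambda)<0$, and Theorem~\ref{thm:asympt} gives asymptotic stability of $x=0$ for~(\ref{eq:x}).

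For the non-asymptotic-stability claim in the regime $b>|a|$ and $E\ge\arccos(-a/b)/\sqrt{b^2-a^2}$, I would invoke Theorem~\ref{th:hayes}: the single Dirac distribution $f(\tau)=\delta(\tau-E)$ has mean $E$, and in this parameter range Hayes' theorem says it is \emph{not} asymptotically stable. So there exists at least one distribution with the prescribed mean for which~(\ref{eq:x}) fails to be asymptotically stable, which is exactly the content of ``may not be asymptotically stable (depending on the particular distribution).'' This also records the optimality asserted in the introduction: the bound~(\ref{eq:E-cor}) cannot be weakened if one wants a conclusion valid for every distribution of the given mean.

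I do not anticipate a serious obstacle — the theorem is a packaging of Theorem~\ref{th:main}, Theorem~\ref{th:hayes}, and the two delay-independent facts (i)--(ii). The one point requiring a little care is the bookkeeping of the rescaling: one must check that $b>0$ can indeed be assumed without loss of generality, that the rescaled mean $bE$ together with rescaled parameter $a/b$ satisfies~(\ref{eq:sc}) exactly when the original data satisfy~(\ref{eq:E-cor}), and that the map $\lambda\mapsto\lambda/b$ (or $b\lambda$, depending on the direction) preserves the sign of the real part so that stability transfers between~(\ref{eq:x}) and~(\ref{eq:xx}). Once that algebraic identification is pinned down, the rest is immediate.
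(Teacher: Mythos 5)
Your proposal is correct and follows exactly the route the paper intends: Theorem~\ref{th:stab} is stated there as a summary of Theorem~\ref{th:main}, Theorem~\ref{th:hayes}, the delay-independent facts (i)--(ii), and the rescaling $t\to bt$, and your bookkeeping (rescaled mean $bE$, parameter $a/b$, so that (\ref{eq:E-cor}) becomes (\ref{eq:sc}), plus the Dirac mass at $E$ witnessing possible instability) is precisely the verification the paper leaves implicit. No gaps worth noting.
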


\section{Compartment Model of Hematopoiesis}\label{s:hemato}

Circulating blood cells are continuously renewed by a hierarchical structure of cells maintained by hematopoietic stem cells (HSCs). Hematopoiesis consists in a complex set of feedback loops that control blood cell production. HSCs can either self-renew or differentiate to one of the three main blood cell lineages: white blood cells, platelets and red blood cells. Through successive division and differentiation stages, HSCs become progenitors (immature cells), precursors (differentiated cells), and then fully mature cells. At every stage of this hierarchy, feedback loops regulate cell differentiation, proliferation, and death. The process of red blood cell production is tightly controlled by erythropoietin, a growth factor released by the kidneys when blood oxygen is low, and whose action inhibits cell death \cite{kb1990}. Platelet production and white blood cell production processes are also controlled by growth factors (thrombopoietin \cite{k2005} and G-CSF \cite{gcsf}, respectively). It is usually thought that mature blood cells act negatively, through growth factor release, on precursors, progenitors and HSCs dynamics \cite{cm2005a,cm2005b}. 

From a modeling viewpoint, the hierarchical structure of hematopoiesis can be described by a finite system of differential equations, each equation describing the dynamics of one cell generation \cite{bmm1995, bbm2003, cm2005a, cm2005b, m1978, smc2011}. Such a view is largely accepted, both by modelers and biologists, even though mechanisms involved in cell differentiation processes are complex and there is no reason to believe that cells always go through a forward differentiation process. 

In 2005, Colijn and Mackey \cite{cm2005a, cm2005b} proposed a compartment model of hemato\-poiesis, based on previous models of hematopoietic stem cell dynamics \cite{m1978}, white blood cell dynamics \cite{bbm2003}, platelet dynamics \cite{am2007} and red blood cell dynamics \cite{bmm1995}. This model consists in a system of 4 differential equations with discrete delays. Each equation describes the number of either HSCs, red blood cells, white cells or platelets. Cells spend a finite amount of time in each of these compartments during which they mature and divide. Delays account for cell stage durations. Colijn and Mackey's model \cite{cm2005a, cm2005b} has been further justified and numerically analyzed by Colijn and Mackey \cite{cm2007} and Lei and Mackey \cite{lm2011}, who showed that it exhibits multiple steady states. Stability analysis of this model is made difficult by the presence of several discrete delays. A simpler model, based on ordinary differential equations, can then be considered, similar to the one by Stiehl and Marciniak-Czochra \cite{smc2011}. However, even in this case, the structure of the system with several compartments induces a natural delay, and the stability analysis is not straightforward.

We consider a compartment model of hematopoiesis that encompasses the main dynamical properties existing hematopoiesis models, and focus on stability conditions for this system. The compartment model can be expressed as a single equation with a general distributed delay. We showed that among all delay distributions with a given fixed mean, the distribution with a single discrete delay (that is, the delay equals the mean) is the most unstable one. Consequently we can provide a condition for the stability of the hematopoiesis model by determining when the equation with a single delay is stable.

Let denote by $x(t)$ the number of HSCs at time $t$, and by $z_i(t)$, $i=1,2,3$, the densities of circulating platelets, white cells, and red blood cells, respectively. We assume that $x$ produces the quantities $z_i$ through a linear chain process, describing the compartmental structure of each hematopoietic lineage. The number of mature cells  $z_i$ act on a negative feedback loop that represses the production of $x$. The disappearance rate of HSCs, $\alpha$, is assumed constant. The HSC production rate $P$ is a function that  depends on $x$ and a weighted average $z$ of the repressors $z_i$. Namely $z = \sum_{i=1}^3 p_iz_i$, where $p_i \geq 0$ and $\sum_{i=1}^3 p_i =1$. The HSC number $x$ is governed by the equation
\begin{equation}\label{eq:x-hem}
\dot x = P(x,z) - \alpha x.
\end{equation}
Each mature cell number $z_i(t)$, $i=1,2,3$, is assumed to be the product of a linear chain of differential equations of the type
\begin{equation}\label{eq:linchain}
\left\{\begin{array}{lcl}
\dot y_i^{(1)} & =& \beta_i\bigl(x - y_i^{(1)}\bigr), \\
\dot y_i^{(j)} & =& \beta_i\bigl(y_i^{(j-1)} - y_{i}^{(j)}\bigr), \quad j=1,\dots,q_i-1, \\
\dot z_i & = & \beta_i\bigl(y_{i}^{(q_i-1)} - z_i\bigr).
\end{array}\right.
\end{equation}
In the $i$-th hematopoietic lineage, the cell number in generation $j$-th is denoted by $y_i^{(j)}$, $j=1,\dots,q_i-1$. Mature cells $z_i$ form compartment $q_i$, and immature cells $x$ compartment $0$. System (\ref{eq:x-hem})--(\ref{eq:linchain}) describes a hierarchical structure with parallel negative feedback loops of length $q_i$, with kinetic parameters $\beta_i$, $i=1,2,3$. This situation hypothesizes that each compartment in each hematopoietic lineage depends only on the previous compartment and, except for the source term $\beta_i x$, lineages are independent from each other. 

This system is an instance of a nonlinear system with a linear subsystem \cite{cooke1982,macdonald2008}. For each lineage $i$, thanks to the usual chain trick in System (\ref{eq:linchain}), the repressors $z_i$ can be expressed in terms of the history of $x$ convoluted by a Gamma distribution,
\begin{displaymath}
z_i(t) = \int_0^\infty x(t-\tau) g(\tau,q_i,\beta_i) d \tau, \qquad \textrm{with} \qquad
g(\tau,q_i,\beta_i) = \frac{\beta_i^{q_i}}{\Gamma(q_i)} \tau^{q_i-1} e^{-\beta_i \tau}.
\end{displaymath}
When one focuses only on one hematopoietic lineage, and $z=z_i$ ($p_j=0$ for $j\neq i$), Eq. (\ref{eq:x-hem}) can be expressed as a distributed delay equation with a Gamma distribution with mean $E_i = q_i/\beta_i$ and variance $V_i = q_i/\beta_i^2$. Two limiting cases are useful to consider. When $q_i=1$, mature cells are produced directly from HSCs, and the Gamma distribution becomes an exponential distribution with parameter $\beta_i$. When $E_i = q_i/\beta_i$ is made constant and $q_i \to \infty$, the Gamma distribution converges to a Dirac mass at $E_i$. 

In addition to these three standard delay distributions, more general delay distributions are obtained by considering the above-mentioned linear parallel negative feedback loops. From System (\ref{eq:linchain}), the weighted repressor $z(t)$ remains a delayed version of $x(t)$,
\begin{equation}\label{eq:z-p}
z(t) = \int_0^\infty x(t-\tau) f_p(\tau) d \tau,
\end{equation}
where the density of the distributed delay is a weighted average of Gamma densities,
\begin{displaymath}
f_{p}(\tau) = \sum_{i=1}^3 p_i g(\tau,q_i,\beta_i).
\end{displaymath}
The delay has a mean $E_p = \sum_{i=1}^3 p_i q_i/\beta_i$. In the limiting case where the length $q_i$ of each loop becomes infinite while keeping the ratio $q_i/\beta_i$ constant, the distribution becomes a combination of discrete delays. Therefore, by a suitable choice and number of parallel negative feedback loops, one can obtain an arbitrary complex distribution of delays. 

After expressing the repressor $z$ as a function of the history of $x$ in (\ref{eq:z-p}), one can then write the following equation for $x$, from (\ref{eq:x-hem}) and (\ref{eq:z-p}),
\begin{equation}\label{eq:x-hem-2}
\dot x = P\left(x, \int_0^\infty x(t-\tau) f_p(\tau) d \tau\right) - \alpha x.
\end{equation}
The dynamics of System (\ref{eq:x-hem})--(\ref{eq:linchain}) is entirely contained in (\ref{eq:x-hem-2}). Although the production term depends continuously on the history of $x$, the initial conditions need only to be known at a finite number of locations. Analyzing the stability of Eq. (\ref{eq:x-hem-2}) is however as difficult as the stability of the System (\ref{eq:x-hem})-(\ref{eq:linchain}). 

As a nonlinear production term $P$, we consider the case of a mixed feedback loop, observed when a repressor (mature cells) and an activator (immature cells) are competing. The nonlinear term in equation (\ref{eq:x-hem-2}) is  then
\begin{equation}
P(x,z) =  \frac{k_0 x^r }{1+z^h}.  \label{eq:fmix}
\end{equation}
The parameter $r$ is related to the degree of cooperativity of the positive loop. For $r>1$ the positive loop is positively cooperative and multiple stable steady states are possible. When $r=1$ the positive loop is neutrally cooperative and at most one positive steady state exists.  For $0 \leq r <1$, the positive loop is negatively cooperative and there is a single positive steady state. When $r=0$, the dependence on $x$ of the production rate $P$ is lost. To ensure solutions are bounded, we set $r \leq h$. The parameter $h$ is the Hill coefficient describing the degree of cooperativity of the negative loop. The higher the value of $h$, the steeper the negative control. We assume $h>1$. With these conditions, there is always at least one steady state $\bar x \geq 0$.

Eq. (\ref{eq:x-hem-2}) linearized around a positive steady state $\bar x > 0$ is
\begin{align}\label{eq:linmix}
\dot x = -\alpha(1-r)x - \frac{\alpha^2h}{k_0}\bar x^{h-r+1}\int_0^\infty x(t-\tau) f_p(\tau) d \tau.
\end{align}
For positive cooperativity ($1<r\leq h$), there is a stable steady state $\bar x_0=0$. In addition, there are either zero, one or two positive steady states given by the roots of the equation $\alpha \bar x^h-k_0\bar x^{r-1}+\alpha=0$. In terms of Eq. (\ref{eq:x-lin-1}), $a = \alpha(1-r) < 0$ and $b(\bar x) = \alpha^2h\bar x^{h-r+1}/k_0>0$.
The smaller positive steady state $\bar x_1$ satisfies $a\leq -b(\bar x_1)$ and, by Theorem \ref{th:stab}, is always unstable. The larger steady state $\bar x_2$ satisfies $a> -b(\bar x_2)$ and the sufficient condition on stability of Theorem \ref{th:stab} can be applied in the following proposition.

\begin{proposition}[positive cooperativity]\label{prop:2}
Assume $P$ is given by (\ref{eq:fmix}) and $1<r\leq h$ (mixed feedback loop with positive cooperativity). 
When they exist and are distinct, the smaller positive steady state $\bar x_1$ of (\ref{eq:x-hem-2}) is unstable, and the larger positive steady state $\bar x_2$ is linearly asymptotically stable if
\begin{equation}\label{eq:stabE}
E _p:= \sum_{i=1}^3 p_i \frac{q_i}{\beta_i} < \frac{\arccos \left(\frac{(r-1) k_0}{\alpha h (\bar x_2)^{h-r+1}} \right)}{\alpha \sqrt{(\alpha h(\bar x_2)^{h-r+1}/k_0)^2-(r-1)^2}}.
\end{equation}
When $\bar x_1 = \bar x_2$, the positive steady state is unstable. The zero steady state $\bar x_0 = 0$ is always linearly stable. 
\end{proposition}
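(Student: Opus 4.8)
The plan is to reduce each assertion to the stability chart of Theorem~\ref{th:stab} by analysing the polynomial that determines the positive steady states. Set $\psi(x)=\alpha x^{h}-k_0 x^{r-1}+\alpha$, so that a value $\bar x>0$ is a steady state of (\ref{eq:x-hem-2}) if and only if $\psi(\bar x)=0$. Since $r>1$ and $h\geq r$ give $h-r+1\geq 1>0$, for $x>0$ we have $\psi'(x)=x^{r-2}\bigl(\alpha h\,x^{h-r+1}-k_0(r-1)\bigr)$, whose sign is that of $\alpha h\,x^{h-r+1}-k_0(r-1)$; hence $\psi$ is strictly decreasing on $(0,x_*)$ and strictly increasing on $(x_*,\infty)$, with $x_*=\bigl(k_0(r-1)/(\alpha h)\bigr)^{1/(h-r+1)}$. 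Combined with $\psi(0)=\alpha>0$ and $\psi(+\infty)=+\infty$, this shows there are at most two positive roots: two distinct ones satisfy $\bar x_1<x_*<\bar x_2$, and they coincide (a double root of $\psi$) precisely when $\bar x_1=\bar x_2=x_*$.

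From the linearisation (\ref{eq:linmix}) one reads off $a=\alpha(1-r)<0$ and $b(\bar x)=\alpha^{2}h\,\bar x^{h-r+1}/k_0>0$, so
\begin{displaymath}
a+b(\bar x)=\alpha\Bigl(\tfrac{\alpha h}{k_0}\,\bar x^{\,h-r+1}-(r-1)\Bigr)=\frac{\alpha}{k_0\,\bar x^{\,r-2}}\,\psi'(\bar x),
\end{displaymath}
and thus $a+b(\bar x)$ has the same sign as $\psi'(\bar x)$. Hence $a<-b(\bar x_1)$ at $\bar x_1<x_*$, $a=-b$ at a double root, and $a+b(\bar x_2)>0$ at $\bar x_2>x_*$, with moreover $b(\bar x_2)>\alpha(r-1)=|a|$. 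At $\bar x_1$, and at a coincident steady state, we therefore have $a\leq-b$, so Theorem~\ref{th:stab} makes the zero solution of (\ref{eq:linmix}) unstable; this proves the two instability claims. At $\bar x_2$ we are in the regime $b>|a|$, so Theorem~\ref{th:stab} yields linear asymptotic stability precisely when $E<\arccos(-a/b)/\sqrt{b^{2}-a^{2}}$. Substituting $-a/b=(r-1)k_0/(\alpha h\,\bar x_2^{\,h-r+1})$ and $\sqrt{b^{2}-a^{2}}=\alpha\sqrt{(\alpha h\,\bar x_2^{\,h-r+1}/k_0)^{2}-(r-1)^{2}}$, together with the fact that $f_p(\tau)=\sum_i p_i\,g(\tau,q_i,\beta_i)$ has mean $E_p=\sum_i p_i q_i/\beta_i$ (each Gamma density $g(\tau,q_i,\beta_i)$ having mean $q_i/\beta_i$), reproduces condition (\ref{eq:stabE}). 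For the zero steady state, $P(x,z)=k_0 x^{r}/(1+z^{h})$ is $O(x^{r})$ near $x=0$ with $r>1$, so $\partial_x P(0,0)=\partial_z P(0,0)=0$ and the linearisation of (\ref{eq:x-hem-2}) at $\bar x_0=0$ is $\dot x=-\alpha x$; this corresponds to $a=\alpha>0=|b|$, which lies in the delay-independent stability region of Theorem~\ref{th:stab}.

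The only delicate point I would watch is the sign bookkeeping relating $\psi'(\bar x)$ to $a+b(\bar x)$ — in particular the factor $x^{r-2}$, which blows up at $0$ when $1<r<2$ but remains positive on $(0,\infty)$ — and making sure that the boundary value $a=-b$ at a double root is handled by the ``unstable if $a\leq-b$'' clause of Theorem~\ref{th:stab} rather than by its delay-dependent clause. Everything else, namely the monotonicity analysis of $\psi$ and the algebraic substitution producing (\ref{eq:stabE}), is routine.
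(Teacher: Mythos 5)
Your proof is correct and follows essentially the same route as the paper, which obtains the proposition directly from the linearization coefficients $a=\alpha(1-r)$, $b(\bar x)=\alpha^2 h\bar x^{\,h-r+1}/k_0$ and the stability chart of Theorem~\ref{th:stab}. The only difference is that you make explicit, via the monotonicity of $\psi$ and the identity $a+b(\bar x)=\tfrac{\alpha}{k_0\bar x^{\,r-2}}\psi'(\bar x)$, the sign claims $a\leq -b(\bar x_1)$ and $b(\bar x_2)>|a|$ that the paper states without detail.
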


For negative cooperativity ($0 \leq r < 1$), there exists a steady state $\bar x_0=0$ only if $r>0$, in which case it is unstable. In addition, there is a unique positive steady state given by the root of the equation $\alpha(1+\bar x^h)\bar x^{1-r} = k_0$. The linear equation is given by equation (\ref{eq:linmix}), and the instantaneous coefficient is $a = \alpha(1-r)> 0$, the delayed coefficient is $b = \alpha^2h\bar x^{h-r+1}/k_0 > 0$. 

For neutral cooperativity ($r=1$), there is a steady state $\bar x_0=0$, whose stability depends on the existence of a positive steady state. There exists a positive steady state $\bar x=((k_0-\alpha)/\alpha)^{1/h}$ only if $k_0>\alpha$, and in this case $a = 0$ and $b = \alpha h (k_0-\alpha)/k_0 > 0$. Theorem \ref{th:stab} can be applied in the following proposition to determine stability .

\begin{proposition}[neutral and negative cooperativity]\label{prop:3}
Assume $P$ is given by (\ref{eq:fmix}). When $r=1$ (mixed feedback loop with neutral cooperativity), a unique positive steady state, $\bar x=((k_0-\alpha)/\alpha)^{1/h}$, of (\ref{eq:x-hem-2}) exists if $k_0>\alpha$. If it exists, it is linearly asymptotically stable if 
\begin{displaymath}
E _p:= \sum_{i=1}^3 p_i \frac{q_i}{\beta_i} < \frac{k_0\pi}{2\alpha h (k_0-\alpha)}.
\end{displaymath}
The zero steady state $\bar x_0 = 0$ is stable if $k_0<\alpha$ and unstable otherwise.

When $0\leq r< 1$ (negative cooperativity), a unique positive steady state $\bar x$ of (\ref{eq:x-hem-2}) exists. It is linearly asymptotically stable if $\bar x^{h+1-r}\leq k_0(1-r)/(\alpha h)$, or if $\bar x^{h+1-r}> k_0(1-r)/(\alpha h)$ and (\ref{eq:stabE}) holds true. The zero steady state $\bar x_0 = 0$ is always unstable.
\end{proposition}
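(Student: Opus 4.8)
The plan is to derive Proposition \ref{prop:3} as a direct application of Theorem \ref{th:stab} to the linearized equation (\ref{eq:linmix}), exactly as was done for Proposition \ref{prop:2} in the positively cooperative case. First I would set up the three cases dictated by the coefficients of (\ref{eq:linmix}). Writing $a = \alpha(1-r)$ and $b = \alpha^2 h \bar x^{h-r+1}/k_0$, I would note that $b>0$ always (at a positive steady state), while the sign and size of $a$ relative to $b$ depend on $r$ and on the location of $\bar x$. The mean of the delay distribution $f_p$ is $E_p = \sum_{i=1}^3 p_i q_i/\beta_i$, since each Gamma component $g(\tau,q_i,\beta_i)$ has mean $q_i/\beta_i$ and the mean of a convex combination is the convex combination of the means.

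Second, for $r=1$ I would observe $a=0$, so we are automatically in the regime $b>|a|$ of Theorem \ref{th:stab}, and the stability condition (\ref{eq:E-cor}) reads $E_p < \arccos(0)/\sqrt{b^2} = (\pi/2)/b$. Substituting $b = \alpha h(k_0-\alpha)/k_0$ (using $\bar x^h = (k_0-\alpha)/\alpha$, so $\bar x^{h-r+1}=\bar x^h = (k_0-\alpha)/\alpha$) gives the stated bound $k_0\pi/(2\alpha h(k_0-\alpha))$. Existence of the positive steady state $\bar x = ((k_0-\alpha)/\alpha)^{1/h}$ requires $k_0>\alpha$; this is immediate from solving $\alpha(1+\bar x^h)\bar x^{0} = k_0$ (the $r=1$ instance of $\alpha(1+\bar x^h)\bar x^{1-r}=k_0$). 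For $\bar x_0=0$: linearizing (\ref{eq:x-hem-2}) at $x=0$ with $r=1$ gives $\dot x = (k_0-\alpha)x$ in the instantaneous part with no delayed part, so $x=0$ is stable iff $k_0<\alpha$ and unstable iff $k_0>\alpha$; I would phrase this via the characteristic root $\lambda = k_0-\alpha$.

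Third, for $0\le r<1$ I would note $a = \alpha(1-r)>0$ and $b>0$. Theorem \ref{th:stab} then gives delay-independent asymptotic stability precisely when $a\ge b$, i.e. $\alpha(1-r)\ge \alpha^2 h\bar x^{h-r+1}/k_0$, which rearranges to $\bar x^{h+1-r}\le k_0(1-r)/(\alpha h)$; in the complementary case $b>a>0$ one has $b>|a|$ and the conditional bound (\ref{eq:E-cor}) applies, which is exactly (\ref{eq:stabE}) with $a=\alpha(1-r)$ (note $\arccos(-a/b) = \arccos((r-1)k_0/(\alpha h\bar x^{h-r+1}))$ and $\sqrt{b^2-a^2} = \alpha\sqrt{(\alpha h\bar x^{h-r+1}/k_0)^2-(r-1)^2}$, matching the displayed formula). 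Existence and uniqueness of the positive steady state follows because $\phi(\bar x):=\alpha(1+\bar x^h)\bar x^{1-r}$ is continuous, strictly increasing on $(0,\infty)$ with $\phi(0^+)=0$ and $\phi(\infty)=\infty$, hence hits $k_0$ exactly once. For $\bar x_0=0$ with $0<r<1$: the instantaneous coefficient of the linearization at $0$ vanishes only through the $x^r$ term whose derivative blows up, so one argues directly that $0$ is unstable (for $r>0$ the production term dominates near $0$; when $r=0$ there is no zero steady state at all, since $P(0,z)=k_0\ne 0$).

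The routine part is the algebraic substitution of $b$ in terms of $\bar x$ and the trigonometric identities relating $\arccos(-a/b)$ to the arccos expression in (\ref{eq:stabE}). The one genuinely delicate point is the stability claim for the trivial steady state $\bar x_0=0$ when $0<r<1$: there the map $x\mapsto k_0 x^r$ is not differentiable at $0$, so Theorem \ref{th:stab} cannot be invoked through a linearization, and instead I would argue instability directly from Eq. (\ref{eq:x-hem-2}) --- for small $x>0$ and $z>0$ the term $k_0x^r/(1+z^h)$ dominates $\alpha x$, so $\dot x>0$ and solutions starting near $0$ move away --- or, alternatively, remark that $x=0$ solves $\alpha(1+\bar x^h)\bar x^{1-r}=k_0$ only in a degenerate sense and is not a steady state of the full nonlinear equation when $r\in(0,1)$ unless one interprets the feedback appropriately; I would make precise which reading is intended before concluding. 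This is the step that requires care; everything else is a direct translation of Theorem \ref{th:stab}.
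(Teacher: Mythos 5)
Your proposal is correct and follows essentially the same route as the paper, which likewise establishes Proposition \ref{prop:3} by computing $a=\alpha(1-r)$, $b=\alpha^2 h\bar x^{h-r+1}/k_0$ and the mean $E_p=\sum_i p_i q_i/\beta_i$ for the linearization (\ref{eq:linmix}) and then invoking Theorem \ref{th:stab} case by case (the paper offers no separate proof beyond that preceding computation). One small correction to your hedged alternative: for $r\in(0,1)$ the origin \emph{is} a genuine steady state of (\ref{eq:x-hem-2}) since $0^r=0$ (only at $r=0$ does it fail to exist), so your direct instability argument --- for small $x>0$ and bounded $z$, $\dot x = k_0x^r/(1+z^h)-\alpha x>0$ because $x^r$ dominates $x$ --- is the correct way to handle the nondifferentiability of $x\mapsto k_0x^r$ at $0$, a point the paper simply asserts without argument.
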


A model with neutral cooperativity has been considered before by Mackey and Glass \cite{mackey1977} in the context of blood cell production. Neutral cooperativity arises when HSCs proliferate at a rate proportional to their number. In this situation, the steady state can be solved explicitly and the stability condition is relatively simple to state. The existence condition defines whether stem cells reproduce quickly enough to maintain their population ($k_0>\alpha$) or not. The original Mackey-Glass equation contained a single discrete delay at $E_p$. Replacing the discrete delay by a general delay distribution cannot make the positive steady state unstable, as illustrated in Fig.~\ref{f:mg}.

\begin{figure}
\begin{center}
A\includegraphics[trim= 1cm 0.5cm 0.6cm 1cm]{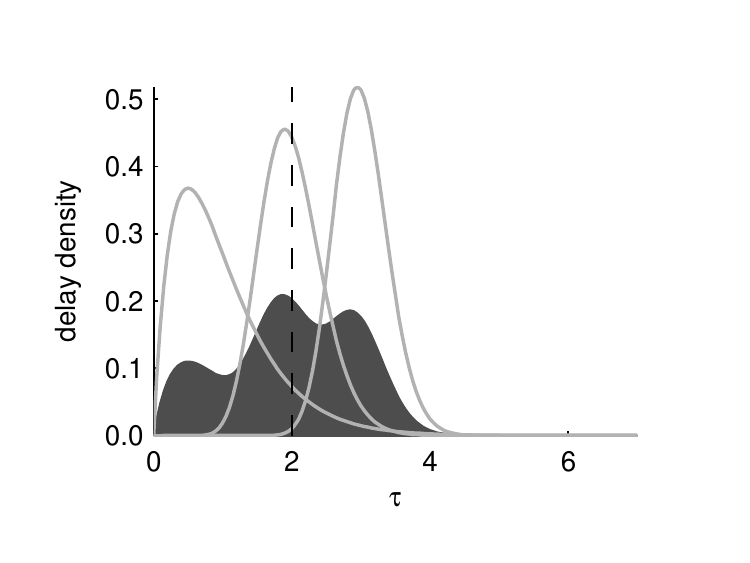}
B\includegraphics[trim= 1cm 0.5cm 0.6cm 1cm]{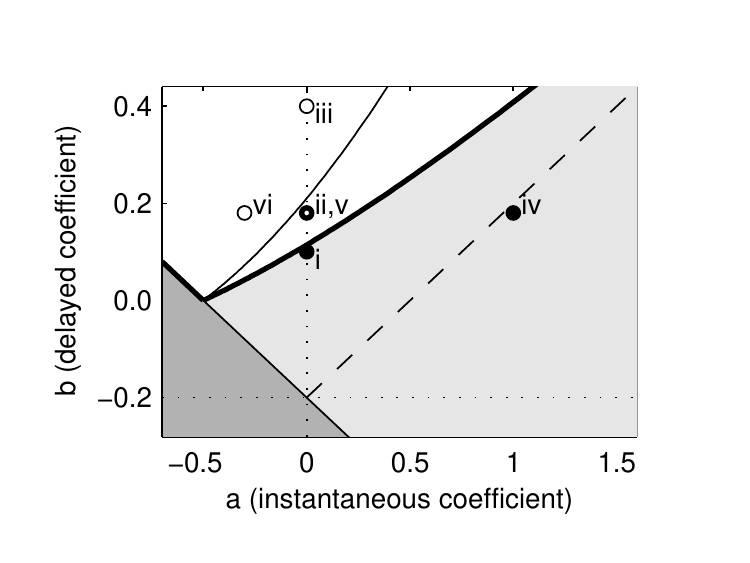}
C\includegraphics[trim= 1cm 0.5cm 0.6cm 0.5cm]{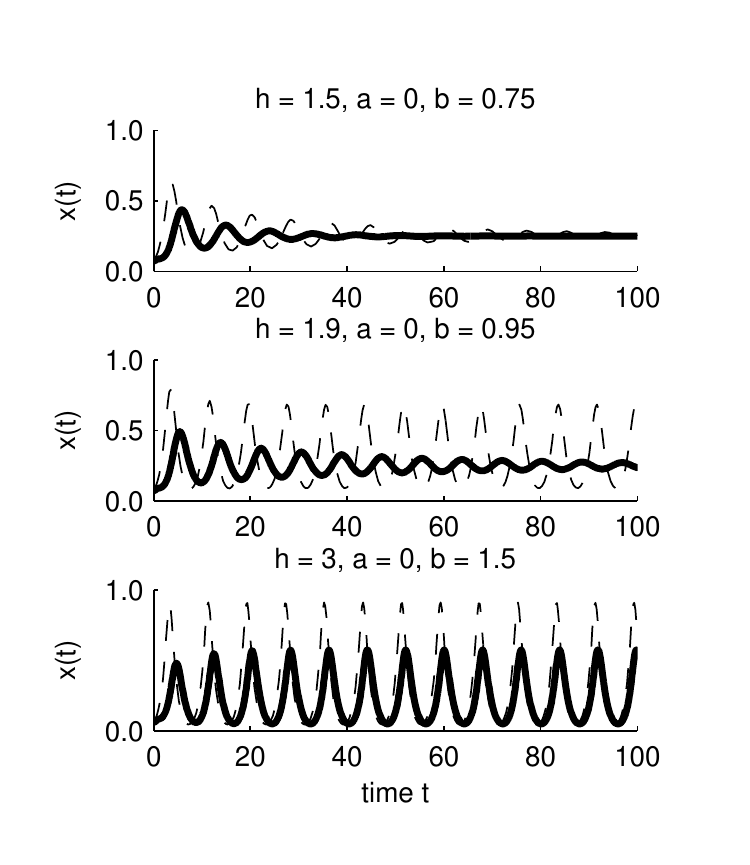}
D\includegraphics[trim= 1cm 0.5cm 0.6cm 0.5cm]{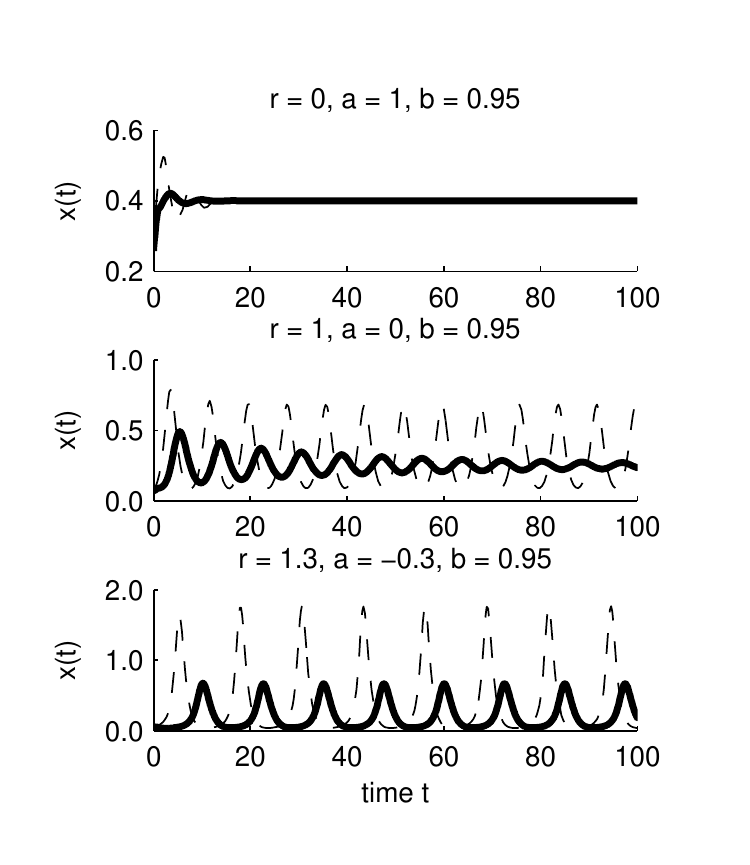}
\end{center}
\caption{Comparison of solutions of Eq.~(\ref{eq:x-hem-2}) with a distributed delay and with a discrete delay for varying values of $r$ and $h$. Fixed parameters values are $\alpha = 1$ and $k_0 = 2$ (so $\overline{x}=1$ is a steady state of (\ref{eq:x-hem-2}), whatever the values of $r$ and $h$), and $q=\{2, 20, 60\}$, $\beta = \{2, 10, 20\}$, $p = \{0.3, 0.4, 0.3\}$. (A) The distributed delay (\emph{shaded area}) is an average of three Gamma densities  (\emph{grey lines}) with mean delay $E = q/\beta = \{1, 2, 3\}$. The discrete delay is the mean delay $E_p = \sum_{i=1}^3 p_i E_i = 2$ (\emph{dashed}). (B) Stability chart of the positive steady state $\bar x = 1$, for varying $r$ and $h$, and the stability condition is given in Proposition \ref{prop:3}. The distributed delay is stable at points i, ii and iv, while the discrete delay is stable at points i and iv. Color coding is as in Fig.~\ref{f:chart}. (C, D) Time series of the system with distributed (\emph{solid}) or discrete delay (\emph{dashed}). (C) Neutral cooperativity, increasing Hill coefficient: $r=1$ and $h = 1.5$ (i), $1.9$ (ii) and $3.0$ (iii). (D) Constant Hill coefficient, increasing cooperativity: $h=1.9$ and $r = 0$ (iv), $1$ (v) and $1.3$ (vi).}\label{f:mg}
\end{figure}
 

\section{Conclusion}

We have shown that for a given mean delay, the scalar linear differential equation with a distributed delay is asymptotically stable provided that the corresponding equation with a single discrete delay is asymptotically stable. Hence, linear systems with a discrete delay are ``more'' unstable than linear systems with distributed delay. This result provides a sufficient condition for the stability of a large class of linear systems, as instanced by a model of hematopoiesis with parallel lineages. 

Quite often the aim of the modeling is not to reproduce stability but rather instability, via periodic oscillations. Pathological cases in hematopoiesis (blood diseases, leukemias) can for instance often be explained by the destabilization of the steady state which starts oscillating periodically. Our result shows that it is more difficult to reproduce periodic oscillations, observed experimentally, with a distributed delay than with a discrete delay.

\section*{Acknowledgments} 
This work has been supported by ANR grant ProCell ANR-09-JCJC-0100-01.


\bibliographystyle{siam}
\bibliography{dde}

\end{document}